\numberwithin{equation}{section}
\tikzstyle{place}=[circle,draw=black!100,fill=black!100,thick,inner sep=0pt,minimum size=1mm]
\tikzstyle{left}=[>=latex,<-,semithick]
\tikzstyle{right}=[>=latex,->,semithick]
\tikzstyle{nleft}=[>=latex,-,semithick]
\tikzstyle{nright}=[>=latex,-,semithick]
\tikzstyle{right2}=[-,semithick]
\newtheorem{theorem}{Theorem}[section]
\newtheorem{lemma}[theorem]{Lemma}
\newtheorem{corollary}[theorem]{Corollary}
\newtheorem{conjecture}[theorem]{Conjecture}
\theoremstyle{definition}
\newtheorem{definition}[theorem]{Definition}
\newtheorem{remark}[theorem]{Remark}
\newtheorem{example}[theorem]{Example}
\newtheorem{observation}[theorem]{Observation}
\begin{document}

\title[Shedding vertices 
of vertex decomposable well-covered 
graphs]{Shedding vertices of vertex decomposable 
well-covered graphs}
\thanks{Last updated: March 5, 2018}

\author{Jonathan Baker}
\address{Brock University, Hamilton, ON, Canada L8K 1V7}
\email{jb16ok@brocku.ca}

\author{Kevin N. Vander Meulen}
\address{Department of Mathematics\\
Redeemer University College, Ancaster, 
ON, Canada L9K 1J4}
\email{kvanderm@redeemer.ca}

\author{Adam Van Tuyl}
\address{Department of Mathematics and Statistics\\
McMaster University, Hamilton, 
ON, Canada L8S 4L8}
\email{vantuyl@math.mcmaster.ca}

\keywords{well-covered graph, vertex decomposable graph,
dominating set, shedding vertices, Cohen-Macaulay graph}
\subjclass[2010]{05C69, 05C75, 13D02, 13F55}

\begin{abstract}
We focus our attention on well-covered graphs that are vertex
decomposable. We show that for many
known families of these vertex decomposable graphs, 
the set of shedding vertices forms a dominating set.
We then construct three new infinite families of well-covered
graphs, none of which have this property. 
We use these results to provide a minimal counterexample to
a conjecture of Villarreal regarding Cohen-Macaulay graphs.
\end{abstract}

\maketitle


\section{Introduction}

In this paper we focus on well-covered graphs $G$ that have the additional
property of being vertex decomposable 
(see Definition \ref{graphversion}).
A subset $D$ of the vertex set $V$ of $G$ is a 
{\it dominating set} if every vertex $x \in V\setminus D$
is adjacent to a vertex of $D$. We observe that for most of the
known constructions of pure vertex decomposable graphs, the 
set of shedding vertices ${\rm Shed}(G)$ is a dominating set.
The next result summarizes some of our findings.

\begin{theorem}\label{summary}  Suppose that $G$ is a pure vertex decomposable graph. 
If $G$ is 
\begin{enumerate}
\item[$(i)$] 
a bipartite graph, or
\item[$(ii)$] 
a chordal graph, or
\item[$(iii)$] 
a very well-covered graph, or
\item[$(iv)$] 
a vertex-transitive graph, or
\item[$(v)$] 
a Cameron-Walker graph, or
\item[$(vi)$] 
a clique-whiskered graph, or
\item[$(vii)$] 
a graph with girth at least five,
\end{enumerate}
then ${\rm Shed}(G)$ is a dominating set.
\end{theorem}

\noindent
In particular, $(i)$ is Corollary~\ref{bipartite}, $(ii)$ is 
Theorem~\ref{chordal}, $(iii)$ is Theorem~\ref{verywellcovered},
$(iv)$ is Theorem \ref{circulant}, 
$(v)$ is Corollary \ref{CWgraph},
$(vi)$  is Theorem \ref{cliquewhisker}, and 
$(vii)$ Theorem \ref{girthfive}.

The fact that ${\rm Shed}(G)$ is a dominating set for all these known
vertex decomposable graphs led us to question if this is a feature
of all pure vertex decomposable graphs. Pursuing that question eventually led us
to develop three new infinite families of (vertex decomposable) well-covered graphs.
These infinite families fail to have the property that ${\rm Shed}(G)$ is 
a dominating set and, as we show at the end of the paper,  
provide new counterexamples and insight to a conjecture of Villarreal. 

We outline the structure of this paper.  Section~\ref{VD} 
introduces the definition of pure vertex decomposable graphs
and Section~\ref{shed} describes the set of shedding 
vertices with some introductory tools for identifying them.
Section~\ref{circ} develops our results 
for the chordal and vertex-transitive pure vertex decomposable graphs.
In Section~\ref{vdcons},
we consider two constructions of pure vertex decomposable graphs,
and show that any pure vertex decomposable graph $G$ constructed via 
either construction 
satisfies the property that ${\rm Shed}(G)$ is a dominating set.
In Section~\ref{vwc}, we consider all the very well-covered graphs that
are vertex decomposable.  
In Section~\ref{girth5}, we focus on all pure vertex decomposable graphs with girth
at least five.   
In Section~\ref{newgraphs}, we describe three infinite families of graphs where each 
graph $G$ is pure vertex decomposable, but ${\rm Shed}(G)$ is not
 a dominating set. In Section~\ref{extendgraph}, we show how to take a graph 
 $G$ which is pure vertex decomposable but ${\rm Shed}(G)$ is not a dominating set 
 and duplicate a vertex to construct a larger graph with the same properties.
 We conclude with Section~\ref{insight}, describing how our 
results
 provide new counterexamples for a conjecture of
 Villarreal.
Via a computer search, we find
the smallest pure vertex decomposable graph $G$ for which ${\rm Shed}(G)$
is not a dominating set.  
As part of our computer search, we also show that the set of pure vertex
decomposable graphs is the same as the set of Cohen-Macaulay graphs
for all the graphs on $10$ vertices or fewer. The fact that a minimal counterexample
requires at least nine vertices and that the standard constructions, as
described in Theorem~\ref{summary}, do not provide any counterexamples, make the new constructions
in Section~\ref{newgraphs} relevant for any further analysis of the relationship
between dominating sets and vertex decomposability. 


\section{Vertex Decomposable Graphs}\label{VD}

Let $G$ be a finite simple graph with  
vertex set  $V = \{ x_{1}, \ldots ,x_{n} \}$ and edge set $E$.
We may sometimes write $V(G)$, respectively $E(G)$, for
$V$, respectively $E$, if we wish to highlight that we
are discussing the vertices, respectively edges, of $G$.
A subset $W \subseteq V$ is an \textit{independent} set if no two vertices of 
$W$ are adjacent.  An independent set $W$ is a {\it maximal independent set}
if there is no independent set $U$ such
that $W$ is a proper subset of $U$.  If $W
\subseteq V$ is an independent set, then $V \setminus W$
is a {\it a vertex cover}.  
A vertex cover $C$ is a {\it minimal vertex cover} if $V \setminus C$ is a maximal
independent set.
A graph is {\it well-covered}
if all the maximal independent sets have the same cardinality,
or equivalently, if every minimal vertex cover has the same cardinality.
For example, if $P_n$ is the path graph on $n\geq 2$ vertices, then $P_n$ is well-covered
 if and only if $n=2$ or $n=4$. The graphs in Figure~\ref{well} are 
   well-covered graphs. 

\begin{figure}[ht]
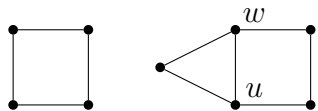


\tikzpicture
\node (1) at (-0.5,0.5)[place] {};
\node (2) at (0.5,0.5)[place] {};
\node (3) at (0.5,-0.5)[place] {};
\node (4) at (-0.5,-0.5)[place] {};
\draw  (2) to (1);
\draw  (3) to (2);
\draw  (4) to (3);
\draw  (4) to (1);
\endtikzpicture \qquad
\tikzpicture
\node (1) at (-0.5,0.5)[place] {};
\node (2) at (0.5,0.5)[place] {};
\node (3) at (0.5,-0.5)[place] {};
\node (4) at (-0.5,-0.5)[place] {};
\node (5) at (-1.5,0)[place]{};
\node (6) at (-0.25, -0.3) {$u$};
\node (7) at (-0.25,0.7) {$w$};
\draw  (2) to (1);
\draw  (3) to (2);
\draw  (4) to (3);
\draw  (4) to (1);
\draw  (5) to (4);
\draw  (1) to (5);
\endtikzpicture

\caption{Two well-covered graphs.}\label{well}
\end{figure}

For any $x \in V$, let $G \setminus x$ denote
the graph $G$ with the vertex $x$ and incident edges removed.
The collection of \textit{neighbours} of a vertex $x \in V$ in $G$,
is the set $N(x) = \{y ~|~ \{x,y\} \in E\}$.   
The \textit{closed neighbourhood} of a vertex $x$ is 
$N[x] = N(x) \cup \{ x \}$.  We 
sometimes write $N_G(x)$ or $N_G[x]$ to highlight which graph
$G$ we are considering.
For $S\subseteq V$, we let $G \setminus S$ denote
the graph obtained by removing all the vertices of $S$ and their incident
edges.  

\begin{definition}\label{graphversion}
A graph $G$ is {\it pure vertex decomposable} if  $G$ is well-covered and 
\begin{enumerate}
\item[$(i)$] $G$ consists of isolated vertices, or $G$ is empty, or
\item[$(ii)$] there exists a vertex $x \in V$, called 
a {\it shedding vertex}, such that
$G \setminus x$ and $G \setminus N[x]$ are pure vertex decomposable.
\end{enumerate}
\end{definition}

For example, the first graph, $C_4$, in Figure~\ref{well} is not pure vertex decomposable
since the deletion on any vertex gives the path $P_3$ which is not well-covered and hence
not pure vertex decomposable. The second graph $G$ in Figure~\ref{well} is pure vertex decomposable: 
$G\setminus u$ is the pure vertex decomposable graph $P_4$ and
$G\setminus N[u]$ is an isolated vertex. 

If $G$ is pure vertex decomposable, then the set of shedding vertices 
is denoted by:
\begin{align*}
{\rm Shed}(G) &= \{x \in V ~|~ \mbox{$G \setminus x$ and $G \setminus N[x]$ are pure 
vertex decomposable}\}.
\end{align*}
For example, ${\rm  Shed}(G)=\{u,w\}$  for the pure vertex decomposable graph in Figure~\ref{well}.

\begin{remark}
The study of vertex decomposable graphs 
lies in the intersection of combinatorial algebraic topology and
combinatorial commutative algebra.
In particular, 
Dochtermann-Engstr\"om \cite{DE} and Woodroofe \cite{W} 
independently showed that vertex decomposability of an independence
complex is a useful tool for exploring algebraic properties of an edge ideal
of a graph. 
The \emph{independence complex} of a graph $G$, denoted
${\rm Ind}(G)$, is the simplicial complex 
\[{\rm Ind}(G) = \{ W \subseteq V ~|~ \mbox{$W$ is an independent set}\}.\]
Vertex decomposability was first
introduced by Provan and Billera \cite{PB} for simplicial complexes.
Our definition of pure vertex decomposability is equivalent to
the statement that the {independence complex} of a graph $G$
is a pure vertex decomposable simplicial complex.  
One can use \cite[Lemma 2.4]{DE} to show the equivalence of definitions.
Provan and Billera's definition required that the simplicial complex
be pure (which translates in the graph case to the condition that $G$ is well-covered).
\end{remark}

\begin{remark}\label{equivalence}
A non-pure version of vertex decomposability was introduced
by Bj\"orner and Wachs \cite{BW}. A 
graph is simply called vertex decomposable if ${\rm Ind}(G)$
satisfies Bj\"orner-Wachs's definition.  Specifically,
we say $G$ is \emph{vertex decomposable} if 
\begin{enumerate}
\item[$(i)$] $G$ consists of isolated vertices, or $G$ is empty, or
\item[$(ii)$] there exists a vertex $x \in V$ such that
\begin{enumerate}
\item[$(a)$] $G \setminus x$ and $G \setminus N[x]$ are 
vertex decomposable, and 
\item[$(b)$] no independent set of $G \setminus N[x]$ is
a maximal independent set of $G \setminus x$.
\end{enumerate} 
\end{enumerate}

One can show that $G$ is pure vertex decomposable if and only
if $G$ is well-covered and vertex decomposable.   It should be noted
that verifying that these two statements are equivalent is subtle.
The proof in both directions is by induction on the number of
vertices.  To show that $G$ is pure vertex decomposable implies 
that $G$ is well-covered and vertex decomposable, one 
needs to treat the cases that the shedding vertex is either 
connected or an isolated vertex as a separate cases.  For the converse
direction, one needs condition $(ii)-(b)$ to verify that $G \setminus
x$ is a well-covered graph.  
\end{remark}

\begin{example}  Expanding upon the above remark, we point out
that definition of a pure vertex decomposable graph allows for more
vertices to be shedding vertices than the definition of vertex
decomposable since isolated vertices can be shedding 
vertices.  Consider the well-covered graph $G$ in Figure \ref{sheddingex}.
\begin{figure}[ht]
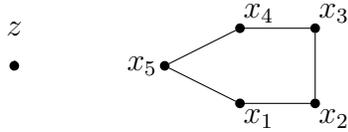

\tikzpicture
\node (8) at (-3.5,0.5) {$z$};
\node (8b) at (-3.5,0)[place] {};
\draw (8b) to (8b);
\node (1) at (-0.5,0.5)[place] {};
\node (2) at (0.5,0.5)[place] {};
\node (3) at (0.5,-0.5)[place] {};
\node (4) at (-0.5,-0.5)[place] {};
\node (5) at (-1.5,0)[place]{};
\node (6) at (-0.25, -.7) {$x_1$};
\node (9) at (.75,-.7) {$x_2$};
\node (11) at (-1.8,0) {$x_5$};
\node (7) at (-0.25,0.7) {$x_4$};
\node (10) at (.75,0.7) {$x_3$};
\draw  (2) to (1);
\draw  (3) to (2);
\draw  (4) to (3);
\draw  (5) to (4);
\draw  (1) to (5);
\endtikzpicture
\caption{A pure vertex decomposable graph.}\label{sheddingex}
\end{figure}
Then the vertex $z$ is a shedding vertex according to the
pure vertex decomposable definition since $G \setminus z = G \setminus N[z]$
is also a pure vertex decomposable graph.  However, $z$ is not
a shedding vertex according to the vertex decomposable
definition since $z$ fails to satisfy condition 
$(ii)-(b)$;  indeed, since $G \setminus N[z] = G \setminus z$,
every maximal independent set of $G \setminus N[z]$ is a maximal
independent set of $G \setminus z$.    Note that $G$ is vertex
decomposable since every vertex $x_i$ for $i=1,\ldots,5$ is a
shedding vertex with respect to Bj\"orner and Wach's definition.
\end{example}

\begin{remark} \label{differentdefn}
We want to highlight that the term shedding vertex appears to
have two different usages in the literature.  In Bj\"orner-Wach's
definition, $x$ is a shedding vertex if it satisfies both
conditions of $(ii)$ given in Remark \ref{equivalence}.  In other papers, e.g. \cite{BC2014} and \cite{W},
a vertex $x$ is a shedding vertex of a graph if it only satisfies
condition $(b)$.   Some care must be taken when applying results
from other papers.  
\end{remark}

The next lemma indicates that when considering vertex decomposable graphs,
it is sufficient to focus on connected graphs.

\begin{lemma}[{\cite[Lemma 20]{W}}]\label{union}
Suppose $G$ and $H$ are disjoint graphs. Then $G \cup H$ is (pure) vertex 
decomposable if and only if $G$ and $H$ are each (pure) vertex decomposable.
\end{lemma} 

By adapting a construction
of 
Biermann, Francisco, H\`a, and Van Tuyl \cite{BFHVT},
we are able to make pure vertex decomposable graphs from any given graph. 
For any graph $G$, let  $S \subseteq V$, and after relabeling,
let $S = \{x_1,\ldots,x_s\}$.  We let $G \cup W(S)$ denote the 
graph with the vertex set $V \cup \{z_1,\ldots,z_s\}$ and edge
set $E \cup \{\{x_i,z_i\} ~|~ i=1,\ldots,s\}$.  The graph
$G \cup W(S)$ is called the {\it whiskered graph at $S$} since
we are adding leaves or ``whiskers'' to all the vertices of $S$.

\begin{theorem}\label{whiskers}\cite[Corollary 4.6]{BFHVT}
Let $G$ be a 
graph and $S \subseteq V$.  If
the induced graph on $V \setminus S$ is a 
chordal graph,
then
$G \cup W(S)$ is vertex decomposable.
In particular, if
the induced graph on $V \setminus S$ is a well-covered 
chordal graph,
then
$G \cup W(S)$ is pure vertex decomposable.
\end{theorem}

\begin{corollary}\label{whiskercor}
If $G$ is any graph with vertex set $V,$ then $G\cup W(V)$ is pure vertex decomposable.
\end{corollary}

\begin{remark}
Corollary \ref{whiskercor} implies that $G \cup W(V)$ is a Cohen-Macaulay
graph (see Section 10).   Villarreal \cite{V1990} was the first
to show that a whiskered graph is a Cohen-Macaulay graph.
\end{remark}


\section{Shedding vertices}\label{shed}

Technically, a vertex 
$x$ is a shedding vertex of a pure vertex decomposable graph $G$
if and only if $G \setminus x$ and $G \setminus N[x]$ 
are both
pure vertex decomposable.  However, as noted in the 
next lemma, to determine if $x$ is a shedding vertex, it is
enough to determine if $G \setminus x$ is a pure  vertex decomposable graph.
The lemma is a direct consequence of known results, such as \cite[Theorem 3.30]{J} or \cite[Proposition 2.3]{PB},
as illustrated in the proof.

\begin{theorem}
\label{VDLink}
Suppose $G$ is pure vertex decomposable. Then $G \setminus N[x]$ is 
pure vertex decomposable for all $x \in V$ and  
${\rm Shed}(G) = \{x \in V ~|~ G \setminus x ~~\mbox{is pure vertex decomposable}
\}.$
\end{theorem}

\begin{proof}
The graph $G \setminus N[x]$ is pure vertex
decomposable if and only if the independence complex ${\rm Ind}(G \setminus N[x])$
is a pure vertex decomposable simplicial complex.  It can be shown that
${\rm Ind}(G \setminus N[x])$ equals the simplicial complex
$${\rm link}(x)=
\{H\subseteq (V\setminus x) ~|~ H\neq\emptyset, H\cup x {\rm{\ is\ an\ independent\ set.}}\},$$ the link of the element $x$ in ${\rm Ind}(G)$.
Then one uses \cite[Proposition 2.3]{PB}, or \cite[Theorem 3.30]{J}, which shows that
every link of a pure vertex decomposable simplicial complex is also
pure vertex decomposable. 
\end{proof}

We now provide some tools that enable us to identify some  elements of ${\rm Shed}(G)$.   
For any $W \subseteq V$,
the {\it induced graph} of $G$ on $W$, denoted $G[W]$, is the graph
with vertex set $W$ and edge set $\{e \in E ~|~ e \subseteq W\}$.
The {\it complete graph} on $n$ vertices, denoted $K_n$,
is the graph on the vertices $\{x_1,\ldots,x_n\}$
with edge set $\{\{x_i,x_j\} ~|~ i \neq j \}$.
A {\it clique} in $G$ is an induced subgraph of $G$ that is
isomorphic to $K_m$ for some $m \geq 1$.

\begin{definition}
A vertex $x \in V$ is a \textit{simplicial vertex} 
if the induced graph on $N(x)$ is a clique; equivalently 
the vertex $x$ appears in exactly one maximal clique of the graph.
A \textit{simplex} is a clique containing at least one simplicial vertex 
of $G$.  A graph $G$ is 
{\it simplicial} if every vertex of 
$G$ is a simplicial vertex or adjacent to one.
\end{definition}

\begin{example}
$(i)$
A vertex $x$ is a {\it leaf} if it has degree one.  Since a leaf
has exactly one neighbour, which is a $K_1$, it is a simplicial vertex.

\noindent
$(ii)$
The graph in Figure \ref{SimplicialGraph} is simplicial. 
The simplicial vertices are $x_1,x_2,x_3$ and $x_4$, and 
each vertex is either a simplicial vertex or adjacent to one.
\begin{figure}[ht]
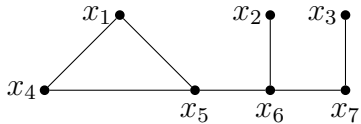

\tikzpicture
\node (1) at (-1,1)[place]{};
\node (1b) at (-1.3,1){$x_1$};
\node (2) at (1,1)[place] {};
\node (2b) at (0.7,1){$x_2$};
\node (3) at (2,1)[place] {};
\node (3b) at (1.7, 1){$x_3$};
\node (4) at (-2,0)[place] {};
\node (4b) at (-2.3, 0){$x_4$};
\node (5) at (0,0)[place]{};
\node (5b) at (0,-0.3){$x_5$};
\node (6) at (1.0,0)[place]{};
\node (6b) at (1, -0.3){$x_6$};
\node (7) at (2,0)[place]{};
\node (7b) at (2, -0.3){$x_7$};
\draw  (1) to (4);
\draw  (1) to (5);
\draw  (2) to (6);
\draw  (3) to (7);
\draw  (4) to (5);
\draw  (5) to (6);
\draw  (6) to (7);
\endtikzpicture 
\caption{A simplicial graph}\label{SimplicialGraph}
\end{figure}
\end{example}

\begin{lemma}\label{wellcoveredsubgraph}
Suppose $G$ is well-covered.  If $x$ is a simplicial vertex,
then for every $y \in N(x)$, the graph $G \setminus y$ is
also well-covered.
\end{lemma}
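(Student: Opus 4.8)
The plan is to prove that $G \setminus y$ is well-covered by establishing something slightly stronger: that every maximal independent set of $G \setminus y$ is in fact a maximal independent set of $G$. Since $G$ is well-covered, all of its maximal independent sets share a common cardinality, say $\alpha$, and so every maximal independent set of $G \setminus y$ will also have cardinality $\alpha$, which is exactly the well-coveredness of $G \setminus y$.

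First I would fix an arbitrary maximal independent set $I$ of $G \setminus y$. Because $y \notin I$ and the edges of $G$ not incident to $y$ coincide with the edges of $G \setminus y$, the set $I$ is automatically independent in $G$ as well. Moreover, since $I$ is maximal in $G \setminus y$, every vertex $v \in V \setminus (I \cup \{y\})$ is already adjacent to some vertex of $I$. Hence the only vertex that could possibly enlarge $I$ to a bigger independent set of $G$ is $y$ itself, and the entire argument reduces to checking that $y$ has a neighbour inside $I$.

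The heart of the proof, and the only place the simplicial hypothesis is used, is this last verification, which I would carry out by splitting on whether $x \in I$. If $x \in I$, then since $y \in N(x)$ the vertices $x$ and $y$ are adjacent, so $y$ has the neighbour $x \in I$. If $x \notin I$, then by the maximality of $I$ in $G \setminus y$ (note $x \neq y$, so $x$ is still a vertex of $G \setminus y$) there is some $w \in I$ adjacent to $x$, that is, $w \in N(x)$. Now $w$ and $y$ both lie in $N(x)$, and because $x$ is simplicial the set $N(x)$ induces a clique; therefore $w$ and $y$ are adjacent, and $y$ again has a neighbour in $I$. In either case $I \cup \{y\}$ fails to be independent in $G$, so $I$ is maximal in $G$.

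Combining the two steps, $I$ is a maximal independent set of $G$ and hence $|I| = \alpha$. As $I$ was an arbitrary maximal independent set of $G \setminus y$, every such set has cardinality $\alpha$, so $G \setminus y$ is well-covered. I expect the case $x \notin I$ to be the main obstacle, since it is precisely there that one must convert the adjacency of some $w \in I$ to $x$ into adjacency of $w$ to $y$; this transfer is possible only because of the clique structure of $N(x)$, and it is worth emphasising that without the simplicial assumption this conclusion can genuinely fail.
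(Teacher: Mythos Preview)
Your proof is correct and follows essentially the same approach as the paper's: both show that every maximal independent set of $G \setminus y$ is already maximal in $G$ by using the clique structure of $N(x)$ to guarantee that $y$ has a neighbour in the set. The paper phrases this as a single contradiction (if $H \cup \{y\}$ were independent then $(N[x]\setminus\{y\})\cap H=\emptyset$, so $H\cup\{x\}$ would enlarge $H$ in $G\setminus y$), whereas you unfold the same reasoning into the two cases $x\in I$ and $x\notin I$; the content is identical.
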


\begin{proof}
Let $H$ be a maximal independent set of $G \setminus y$.    Then $H$ is also an independent set
of $G$.  If $H$ was not maximal in $G$, then $H \cup \{y\}$ must still be independent in $G$.
This implies $(N[x]\setminus \{y\}) \cap  H = \emptyset$.  
But then $H \cup \{x\}$ would be an independent set of $G \setminus y$, contradicting
the maximality of $H$.  So $H$ is also a maximal independent set of $G$, and since
$G$ is well-covered, all the maximal independent sets of $G \setminus y$ have
the same cardinality.
\end{proof}

\begin{lemma}
\label{SimplicialShedding}
Let $G$ be a pure vertex decomposable graph. If $x$ is a simplicial
vertex, then $N(x) \subseteq {\rm Shed}(G)$.
\end{lemma}

\begin{proof}
Let $N(x) = \{y_1,\ldots,y_d\}$ where $d \geq 1$.  
By Theorem~\ref{VDLink}], it is enough to show that 
$G \setminus y_1$ is pure vertex decomposable.

Let $A_0 = G$, and for $i=1,\ldots,d$, we then
define
\[A_i = A_{i-1} \setminus y_i ~~\mbox{and} ~~B_i = A_{i-1} \setminus N_{A_{i-1}}[y_i].\]
Our goal is to show that $A_1 = A_0 \setminus y_1 = G \setminus y_1$
is a pure vertex decomposable graph.

We first note that for each $i=1,\ldots,d$,
\begin{eqnarray*}
B_i &= &A_{i-1} \setminus N_{A_{i-1}}[y_i] = ((((G \setminus y_1) \setminus y_2) \cdots ) \setminus y_{i-1}) \setminus N_{A_{i-1}}[y_i] \\
& = & G \setminus N_G[y_i]
\end{eqnarray*}
because $\{y_1,\ldots,y_{i-1}\} \subseteq N_G[y_i]$ for each $i=1,\ldots,d$.
It follows from Theorem \ref{VDLink} that each graph $B_i$
is pure vertex decomposable.

Note that $x$ is a simplicial vertex for each graph $A_i$ for $i=0,
\ldots,d-1$.  Since $y_i \in N_{A_{i-1}}(x)$ for $i=1,\ldots,d$, it follows
by repeated use of Lemma \ref{wellcoveredsubgraph} that
each graph $A_i = A_{i-1} \setminus y_i$ for $i=1,\ldots,d$
is a well-covered graph.  
Next, we note that $A_d = ((((G \setminus y_1) \setminus y_2) \cdots ) \setminus y_{d}) = (G \setminus N_G[x]) \cup \{x\}$.    Theorem \ref{VDLink} implies
that $(G \setminus N_G[x])$ is pure vertex decomposable, 
and since an isolated vertex is pure vertex decomposable, 
Lemma \ref{union} implies that $A_d$ is vertex
decomposable.  

Because $A_d = A_{d-1} \setminus y_d$ and $B_d = A_{d-1} \setminus N_{A_{d-1}}[y_d]$ 
are pure vertex decomposable, then by definition, $A_{d-1}$ is pure vertex decomposable.
But then because $A_{d-1}$ and $B_{d-1}$ are pure vertex decomposable, then
so is $A_{d-2}$, and so on.  In particular, $A_1 = G \setminus y_1$
is pure vertex decomposable, as desired.
\end{proof}

\begin{remark}
	It is easier to find examples of non-pure vertex decomposable graphs for which
	the shedding vertices do not constitute a dominating set than it is for pure vertex decomposable graphs. 
	For example, the graph $G$ on five vertices 
	and five edges consisting of a pendant leaf vertex attached to $C_4$  
	is not
	well-covered, but $G$ is vertex decomposable in the non-pure sense. Further,  
	the vertex adjacent to the pendant leaf is the only shedding vertex but this is not a dominating
	set. 
\end{remark}


\section{Vertex-transitive and chordal graphs} \label{circ} 

In this section, we show that the set of shedding vertices
for vertex-transitive graphs and chordal graphs is a dominating set.
A graph $G$ is a {\it vertex-transitive graph} if for every
$x_1,x_2 \in V$ there is a graph automorphism $f:V \rightarrow V$
such that $f(x_1) = x_2$.  We then have the following result.

\begin{theorem}\label{circulant}
Suppose $G$ is a vertex-transitive graph.  If $G$ is pure vertex decomposable,
then ${\rm Shed}(G)$ is a dominating set.
\end{theorem}

\begin{proof}
If $G$ is pure vertex decomposable, then there 
exists some vertex $i$ such that $G \setminus i$ is
pure vertex decomposable. 
By the symmetry of a vertex-transitive graph $G$,  $G \setminus j$ is isomorphic to $G \setminus i$ 
for all $i \neq j$.
 But then ${\rm Shed}(G) = V$, and hence ${\rm Shed}(G)$ is 
 a dominating set.
\end{proof}

A \textit{chordal graph} is a graph $G$ such that every induced
cycle in $G$ has length three.  We have the following classification
of pure vertex decomposable chordal graphs.
 
 \begin{theorem}
 \label{WellcoveredChordal}
 Let $G$ be a chordal graph.  Then the following are equivalent:
 \begin{enumerate}
 \item[$(i)$] $G$ is pure vertex decomposable;
 \item[$(ii)$] $G$ is well-covered;
 \item[$(iii)$] Every vertex of $G$ belongs to exactly one simplex of $G$.
 \end{enumerate}
 \end{theorem}
 
 \begin{proof}
%
 $((ii) \Leftrightarrow (i))$ Woodroofe (\cite[Corollary 7]{W}) (and
independently, Dochtermann and Engstr\"om \cite{DE}) showed
 that every chordal graph is also vertex decomposable.
Now use Remark \ref{equivalence}.
 
 $((ii) \Leftrightarrow (iii))$ This is \cite[Theorem 2]{PTV}.
 \end{proof}

We can now prove the following result.

\begin{theorem}\label{chordal}
Suppose $G$ is a chordal graph.  If $G$ is pure vertex 
decomposable, 
then ${\rm Shed}(G)$ is a dominating set.
\end{theorem}

\begin{proof}
Since $G$ is pure vertex decomposable,
by Theorem \ref{WellcoveredChordal},
every vertex of $G$ belongs to exactly one simplex of $G$.
Thus every vertex is either a simplicial vertex or
adjacent to a simplicial vertex. By Lemma~\ref{SimplicialShedding},
each vertex adjacent to a simplicial vertex is a shedding
vertex. Hence ${\rm Shed}(G)$ is a dominating set.
\end{proof}


\section{Vertex Decomposable Constructions}\label{vdcons}

Given a graph $G$, there are some known constructions (see
\cite{CN,HHKO}) that enable one
to build a new pure vertex decomposable graph 
that contains $G$ as an induced subgraph. 
In this section, we show that the resulting graph for the 
corresponding construction in \cite{CN} and \cite{HHKO} 
has the property
that its set of shedding vertices is a dominating set.

\subsection{Appending cliques}
We first consider a construction of
Hibi, Higashitani, Kimura, and O'Keefe \cite{HHKO} that
builds a pure vertex decomposable graph by appending a clique at each vertex.
More precisely, let $G$ be a  
graph with vertex set
$V(G) = \{x_1,\ldots,x_n\}$ and edge set $E(G)$.
Let $k_1,\ldots,k_n$ be $n$ positive integers with $k_i \geq 2$ for
$i=1,\ldots,n$.  We now construct a graph $\widetilde{G}
= (V(\widetilde{G}),E(\widetilde{G}))$ with
\[V(\widetilde{G}) = 
\{x_{1,1},x_{1,2},\ldots,x_{1,k_1}\} \cup \{x_{2,1},\ldots,x_{2,k_2}\} 
\cup \cdots \cup \{x_{n,1},
\ldots,x_{n,k_n}\}\] 
and edge set
\[E(\widetilde{G}) = 
\left.\left\{\{x_{i,1},x_{j,1}\} ~\right|~ \{x_i,x_j\} \in E(G) \right\} 
\cup  \bigcup_{i=1}^n \left.\left\{ \{x_{i,j},x_{i,l}\} ~\right|~
1 \leq j < l \leq k_i \right\}.\]
That is,
$\widetilde{G}$ is the graph obtained from $G$ by attaching a clique
of size $k_i$ at the vertex $x_i$.  

Starting from any graph $G$, 
the graph $\widetilde{G}$ will always be a pure 
vertex decomposable graph by \cite[Theorem 1]{HHKO}.  
Moreover, the shedding set of any graph $G$  arising from this construction
is a dominating set.

\begin{theorem}
\label{CWVertex}
Given any graph $G$, 
the pure vertex decomposable
graph $\widetilde{G}$ has the property that
${\rm Shed}(\widetilde{G})$ is a dominating set.
\end{theorem}

\begin{proof}
For any $i \in \{1,\ldots,n\}$,
$x_{i,k_i} \neq x_{i,1}$ because
$k_i \geq 2$.  The vertex $x_{i,k_i}$ is a simplicial vertex,
so by Lemma \ref{SimplicialShedding} 
$x_{i,1} \in N(x_{i,k_i}) \subseteq 
{\rm Shed}(\widetilde{G})$.  
Thus $T = \{x_{1,1},\ldots,x_{n,1}\}
\subseteq {\rm Shed}(\widetilde{G})$, and 
$T$ 
is a dominating set of $\widetilde{G}$.
\end{proof}

Hibi et al. \cite{HHKO} developed the above 
construction to study Cameron-Walker graphs.  
A graph $G$ is a \textit{Cameron-Walker} graph if the 
induced matching number $G$ equals the matching number of $G$ 
(see \cite{HHKO} for precise definitions).  One
of the main results of \cite{HHKO}
is the fact that
a Cameron-Walker graph $G$ is a pure vertex
decomposable graph if and only if $G = \widetilde{H}$ for some
graph $H$ (with some hypotheses on the $k_i$'s that appear 
in the construction of $\widetilde{H}$).  Consequently,
we can immediately deduce the following corollary.

\begin{corollary}\label{CWgraph}
Suppose $G$ is a Cameron-Walker graph.  If $G$ is pure vertex 
decomposable, 
then ${\rm Shed}(G)$ is a dominating set.
\end{corollary}


\subsection{Clique-whiskering}
A second construction of pure vertex decomposable graphs 
is due to Cook and Nagel \cite{CN}.  
Let $G$ be a graph on the vertex set $V = \{x_1,\ldots,x_n\}$.
A \textit{clique vertex partition} of $V$ 
is a set $\pi = \{ W_{1},\ldots ,W_{t} \}$ of disjoint 
 subsets that partition $V$ such that each induced graph
$G[W_i]$ is a clique.
A \textit{clique-whiskered} graph $G^{\pi}$ constructed from
the graph $G$ with clique partition $\pi = \{W_1,\ldots,W_t\}$
is the graph with $V(G^\pi) = \{x_1,\ldots,x_n,w_1,\ldots,w_t\}$
and
$E(G^\pi) = E \cup \{ \{ x,w_{i} \} ~|~ x \in W_{i} \}$.
In other words, for each clique in the partition $\pi$,
we add a new vertex $w_i$, and join $w_i$ to all the vertices in the clique.

Note that if $\widetilde{G}$ is the graph obtained from $G$ by  
appending cliques with $k_1 = \cdots = k_n = 2$, 
then $\widetilde{G}$ is isomorphic to the clique-whiskered graph 
$G^\pi$ using the clique partition  $\pi = \{\{x_1\},\{x_2\},\ldots,\{x_n\}\}$.

Cook and Nagel (\cite[Theorem 3.3]{CN}) showed that
for any graph $G$ and any clique partition
$\pi$ of $G$, the graph $G^\pi$ is always pure vertex decomposable.
Like the previous construction, any graph constructed
via this method has ${\rm Shed}(G)$ as a dominating set.

\begin{theorem}\label{cliquewhisker}
\label{CliqueWhiskerShedding}
Let $G$ be a  
graph with clique
partition $\pi$.
The pure vertex decomposable graph $G^\pi$ 
has the property that ${\rm Shed}(G^\pi)$ is a dominating
set.
\end{theorem}

\begin{proof}
If $\pi = \{W_1,\ldots,W_t\}$, then
the vertex set of $G^\pi$ is $\{x_1,\ldots,x_n,w_1,\ldots,w_t\}$.   
Every vertex $x_i$ belongs to some clique
$W_j$.  So, in $G^\pi$, the vertex $x_i$ is adjacent to
$w_j$.  By construction, $w_j$ is adjacent only to
the vertices of $W_j$, and since $W_j$ is a clique,
$w_j$ is a simplicial vertex.  
Thus by Lemma \ref{SimplicialShedding}, $x_i 
\in N(w_j) \subseteq {\rm Shed}
(G^\pi)$.  Thus $\{x_1,\ldots,x_n\} \subseteq {\rm Shed}(G^\pi)$,
and this subset forms a dominating set.
\end{proof}


\section{Very well-covered graphs}\label{vwc}

A well-covered graph is {\it very well-covered} if 
every maximal independent set
has cardinality $|V|/2$.  
Very well-covered graphs are known~\cite{F} to have a perfect
matching with a 
neighbour connectedness property. 
A {\it matching} is a subset of edges
of $G$ that do not share any common endpoints.
A matching is {\it perfect} if the set of
vertices in the edges of the matching are
all of the vertices
Given $M$ is a perfect matching
of $G$, we let $M(x)$ denote the vertex matched with $x$.  The matching
$M$ has the \emph{neighbour
connectedness property} if for every vertex $x$ of $G$, if $y\in N(x)$ and $y\neq M(x)$
then $y\not\in N(M(x))$ and $y\in N(z)$ for every $z\in N(M(x))$.

\begin{lemma}\cite[Theorem 1.2]{F}\label{propertyP}
	A graph $G$ is very well-connected if and only if $G$ has at least one perfect matching $M$ and every perfect 
	matching has the neighbour connectedness property.
\end{lemma}

\begin{theorem}[{\cite[Theorem 3.2]{MMCRTY} and \cite[Theorem 0.2]{CRT}}]\label{perfect}
If $G$ is very well-covered then the following are equivalent:
\begin{enumerate}
	\item $G$ is pure vertex decomposable;
	\item $G$ is Cohen-Macaulay;
	\item $G$ has a unique perfect matching.
\end{enumerate}
\end{theorem} 

If $M$ is a perfect matching in $G$, we say an even cycle $C$ is \emph{$M$-alternating} if 
half of the edges of $C$ are in $M$. In the following argument, we use that fact that if $G$
has an $M$-alternating even cycle, then $G$ does not have a unique perfect matching.

\begin{theorem}\label{verywellcovered}
\label{WellCoveredSheddingSet}
Let $G$ be a very well-covered graph.  If $G$ is pure vertex decomposable,
then ${\rm Shed}(G)$ is a dominating set.
\end{theorem}

\begin{proof}
Suppose $G$ is a very well-covered pure vertex decomposable graph. 
Since $G$ is pure vertex decomposable, $G$ has a unique perfect matching $M$ (by Theorem~\ref{perfect}) that has the 
neighbour connectedness property (Lemma~\ref{propertyP}).

Let 
\[S = \bigcup_{\mbox{$z$ is a leaf of $G$}} N(z).\]
We claim that $S$ is a dominating set. We demonstrate this by 
showing that if $S$ is not dominating, then $G$ has an $M$-alternating even cycle, contradicting the
fact that $G$ has a unique perfect matching. 
 
Suppose there exists a vertex $w$ such that $w\not\in S$ and $w$ is not adjacent to any vertex in $S$. 
In particular, $w$ has a neighbour $x_1$ distinct from $M(w)$. (For convenience, we let $x_0=M(w)$.) 
Now $M(x_1)$ is not a leaf since $w$ is not adjacent
to any vertex in $S$. Thus there exists a vertex $x_2\neq x_1$ adjacent to $M(x_1)$.  If $x_2=x_0$, then $G$ has an $M$-alternating
 four-cycle. Thus assume $x_2\neq x_0$. 
 Note that by the neighbour connectedness
property, $x_2\not\in N(x_1)$, and $x_2 \in N(w)$. Also, by the neighbour connectedness property, 
$M(x_2)$ is not adjacent to $w$ or $M(x_1)$. 

Again, since $w$ is not adjacent to any vertex in $S$, there exists a 
vertex $x_3\neq x_2$ that is adjacent to $M(x_2)$. If $x_3\in\{x_1,x_0\}$ then $G$ has an $M$-alternating four-cycle.   
Thus assume $x_3\not\in\{x_1,x_0\}$. By the neighbour connectedness property, 
$x_3$ is not adjacent to $x_1$ or $x_2$, and $x_3$ is adjacent to $w$ and $M(x_1)$. 
By the neighbour conectedness  property, $M(x_3)$ is not adjacent to any vertex in $\{w,M(x_1),M(x_2)\}$.

Repeating the argument, we can obtain a sequence of vertices $x_1,x_2,x_3,\ldots \in N(w)$ with $x_i$ adjacent
to $M(x_{i-1})$ for all $i>1$. Further,
$w$ is not adjacent to $M(x_1),M(x_2),M(x_3),\cdots$ by the neighbour connectedness property. Thus
$x_i\neq M(x_j)$ for any $i$ and $j$. Since $G$ is finite, there must exist some $i>1$ such that $x_i=x_j$ for some $j$ with $0\leq j<i$.
Thus $G$ has an $M$-alternating even cycle $\{x_j,M(x_j),x_{j+1},M(x_{j+1}),\ldots,x_{i-1},M(x_{i-1}),x_j\}$.

Therefore $S$ is a dominating set. By
Lemma~\ref{SimplicialShedding},  
$S \subseteq {\rm Shed}(G)$. 
Therefore ${\rm Shed}(G)$ is a dominating set.
\end{proof}

\begin{corollary}\label{bipartite}
Suppose $G$ is a bipartite graph.  If $G$ is pure vertex 
decomposable, then ${\rm Shed}(G)$ is a dominating set.
\end{corollary}

\begin{proof}
If $G$ is pure vertex decomposable, then $G$ is well-covered. 
In that case, since $G$ is bipartite, $G$ is very well-covered.
The result now follows from Theorem \ref{WellCoveredSheddingSet}.
\end{proof}

\begin{remark}
As we noted in the previous proof, 
the class of very well-covered graphs
contains the family of well-covered bipartite graphs.
Theorem~\ref{perfect} 
can be viewed as a generalization of
results first proved about well-covered bipartite graphs.
Herzog and Hibi gave a combinatorial classification
of Cohen-Macaulay bipartite graphs in \cite[Corollary 9.1.14]{HH2010}. 
The classification of  very well-covered graphs in Theorem~\ref{perfect}
generalizes Herzog and 
Hibi's work.  Van Tuyl \cite{V}  
showed that a bipartite graph
is well-covered and vertex decomposable if and only if it is Cohen-Macaulay.
\end{remark}


\section{Graphs with girth at least five}\label{girth5}

We now consider all pure vertex decomposable graphs 
with girth five or larger.  Vertex decomposable 
graphs from this class were independently classified by 
B\i y\i ko\u{g}lu and Civan \cite{BC2014} and
Hoang, Minh, and Trung \cite{HMT}.
Both of these results relied on the 
classification of well-covered graphs
with girth five or larger due
to Finbow, Hartnell, and Nowakowski \cite{FHN}.

To state the required classification,
we first review the relevant background.
The \textit{girth} of a graph $G$ is the number of vertices of a smallest 
induced cycle of $G$.  If $G$ has no cycles,
then we say $G$ has infinite girth. 
A \textit{pendant edge} is an edge that is incident to a leaf.

An induced $5$-cycle $B$ is said to be {\it basic}
if no pair of adjacent vertices in $B$ have 
degree three or larger in $G$.
A graph $G$ is in the class $\mathcal{PC}$ if $V$ can be partitioned into subsets 
$V = P \cup C$ where $P$ contains all the vertices incident with pendant edges 
and the pendant edges form a perfect matching of $P$, 
and where $C$  contains the vertices of basic 5-cycles, 
and these basic 5-cycles form a partition of $C$.

We then have the following classification 
(see the cited papers for additional equivalent statements).

\begin{theorem}[\cite{BC2014,HMT}]
\label{VDLargeGirth}
Let $G$ be a connected graph of girth at least 5. If $G$ is well-covered, 
then the following are equivalent:
\begin{enumerate}
\item[$(i)$] $G$ is vertex decomposable; 
\item[$(ii)$] $G$ is either an isolated vertex or in the class $\mathcal{PC}$.
\end{enumerate}
\end{theorem}

We first prove a lemma.

\begin{lemma}
\label{PCShedding}
Let $B$ be a basic $5$-cycle of a well-covered graph $G \in \mathcal{PC}$. 
If $B$ has a vertex $x$ adjacent to two vertices of $B$ of degree two in $G$, then 
$x \in {\rm Shed}(G)$.
\end{lemma}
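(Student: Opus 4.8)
The plan is to invoke Theorem \ref{VDLink}, which reduces the assertion $x \in {\rm Shed}(G)$ to showing that $G \setminus x$ is vertex decomposable. I would prove the latter by verifying that $G \setminus x$ again belongs to the class $\mathcal{PC}$ and then applying Theorem \ref{VDLargeGirth}. Write the vertices of the basic $5$-cycle $B$ in cyclic order as $x,a,b,c,d$, so that the two neighbours of $x$ on $B$ are $a$ and $d$; by hypothesis $\deg_G(a)=\deg_G(d)=2$, whence $N(a)=\{x,b\}$ and $N(d)=\{x,c\}$. The key structural observation is that $x$ is adjacent to neither $b$ nor $c$ (as $B$ is induced), so deleting $x$ leaves the degrees of $b$ and $c$ unchanged while turning $a$ and $d$ into leaves with unique neighbours $b$ and $c$. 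Thus in $G\setminus x$ the cycle $B$ degenerates to the path $a-b-c-d$ carrying the two new pendant edges $\{a,b\}$ and $\{c,d\}$.

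First I would note that deleting a vertex creates no new cycles, so the girth of $G\setminus x$ is still at least $5$. I would then propose the $\mathcal{PC}$-partition $P'=P\cup\{a,b,c,d\}$ and $C'=C\setminus\{x,a,b,c,d\}$. The basic $5$-cycles of $C'$ are precisely those of $C$ other than $B$; they remain induced $5$-cycles, and since passing to $G\setminus x$ only lowers degrees, the defining condition of a basic cycle (no two adjacent vertices of degree $\geq 3$) is preserved. On the pendant side, every old pendant edge survives (a leaf of $G$ cannot be adjacent to $x$, or else $x\in P$), and the new edges $\{a,b\}$ and $\{c,d\}$ are genuine pendant edges with leaves $a$ and $d$; because $b$ and $c$ lie in $C$, neither is adjacent to a leaf of $G$, so these two edges are disjoint from the old matching and exactly cover $\{a,b,c,d\}$. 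Hence the pendant edges of $G\setminus x$ form a perfect matching of $P'$.

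The step demanding the most care -- and the one I expect to be the main obstacle -- is controlling the neighbours of $x$ outside $B$, which may exist since the basic condition allows $\deg_G(x)\geq 3$ (its cycle-neighbours $a,d$ have degree $2$). For each such neighbour $z$ I would check that the repartition is unharmed: if $z$ lies in another basic $5$-cycle, then deleting $x$ merely lowers $\deg(z)$ and keeps $z$ inside its (still basic) cycle; if $z\in P$, then $z$ is not a leaf of $G$ (a leaf adjacent to $x$ would force $x\in P$), and deleting $x$ at worst turns $z$ into a leaf, in which case its pendant edge becomes an isolated $K_2$ that is still matched inside $P$. A short girth argument shows that $x$ and $b$ (likewise $x$ and $c$) have no common neighbour, so no neighbour of $b$ or $c$ degenerates into an unexpected leaf. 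Consequently no pendant-incident vertex outside $P'$ and no isolated vertex is created, and $P'$ is exactly the set of pendant-incident vertices of $G\setminus x$; this establishes $G\setminus x\in\mathcal{PC}$.

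Finally, every graph in $\mathcal{PC}$ is well-covered by the Finbow--Hartnell--Nowakowski classification \cite{FHN} underlying Theorem \ref{VDLargeGirth}. Hence each connected component of $G\setminus x$ is a well-covered graph of girth at least $5$ that is either a single vertex or lies in $\mathcal{PC}$, so Theorem \ref{VDLargeGirth} makes each component vertex decomposable, and Lemma \ref{union} makes $G\setminus x$ vertex decomposable. Theorem \ref{VDLink} then delivers $x\in{\rm Shed}(G)$.
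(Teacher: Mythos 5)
Your proof is correct, and its skeleton is the same as the paper's: show that $H = G\setminus x$ again lies in $\mathcal{PC}$ (with $C(H)=C(G)\setminus V(B)$ and $P(H)=P(G)\cup\{a,b,c,d\}$, exactly the repartition the paper uses) and then invoke Theorem \ref{VDLargeGirth} and Theorem \ref{VDLink}. The one genuine divergence is how well-coveredness of $G\setminus x$ is obtained. Since Theorem \ref{VDLargeGirth} is stated with ``well-covered'' as a \emph{hypothesis}, the paper cannot get well-coveredness of $H$ from $\mathcal{PC}$-membership as the theorem is phrased, so it proves it directly: it shows every maximal independent set $W$ of $H$ is already maximal in $G$ (if $W\cup\{x\}$ were independent in $G$, then $W$ misses $a$ and $d$, which are leaves of $H$, and whichever of $b,c$ lies in $W$ forces $W\cup\{d\}$ or $W\cup\{a\}$ to be independent, a contradiction). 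You instead import the converse direction of the Finbow--Hartnell--Nowakowski classification \cite{FHN} --- that a connected girth-$\geq 5$ graph in $\mathcal{PC}$ is well-covered --- which is a true result of that paper, though not among the statements quoted in this one; your route trades the paper's short self-contained independent-set argument for an external citation. On two other points you are actually more careful than the paper: you track the neighbours of $x$ outside $B$ explicitly (ruling out common neighbours of $x$ and $b$, resp.\ $c$, by the girth, and handling a neighbour in $P$ that degenerates into a leaf, producing an isolated $K_2$ still matched inside $P'$), where the paper is terse; and you apply Theorem \ref{VDLargeGirth} componentwise together with Lemma \ref{union}, which addresses the possibility that deleting $x$ disconnects the graph --- a point the paper's application of the (connected-graph) theorem passes over silently.
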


\begin{proof} 
The statement of the lemma is embedded in \cite{HMT} in their proof of
Theorem \ref{VDLargeGirth} that was stated above.
In particular, \cite[Lemma 2.2]{HMT} (which is used to
prove \cite[Theorem 2.4]{HMT}) shows that if a graph $G$ is in 
$\mathcal{SC}$, a class that contains the graphs of $\mathcal{PC}$,
then $G$ is vertex decomposable.  Moreover, to prove this
fact, the authors show that the vertex $x$ in our statement is the required
shedding vertex.  As an aside, a similar argument is found
in \cite[Lemma 5]{FHN} for extendable vertices.  One could also
use \cite[Lemma 16]{W}, but note that the definition of a shedding vertex
is not the same as our usage; one still needs to
show that $G \setminus x$ and $G \setminus N[x]$ are 
vertex decomposable (see Remark \ref{differentdefn}).
\end{proof}

\begin{theorem}\label{girthfive}
Let $G$ be a graph with girth of at least five.
If $G$ is pure vertex decomposable, then
${\rm Shed}(G)$ is a dominating set.
\end{theorem}

\begin{proof}
If $G$ is vertex decomposable, by Theorem \ref{VDLargeGirth}, $G$ is either a single
vertex or $G \in \mathcal{PC}$.    Because the statement is vacuous for a single vertex,
we can assume that $G \in \mathcal{PC}$. 
Let $V = P \cup C$ be the corresponding partition of $G$ and let $x$ be a vertex of $G$. 

Suppose $x \in P$.  Then $x$ is either a leaf or adjacent to a leaf $y$.  
So by Lemma~\ref{SimplicialShedding}, $x$ is a shedding vertex of $G$ or adjacent to one.

Suppose $x \in C$. Then there is a basic $5$-cycle $B$ such that $x \in V(B)$.
If $x$ is adjacent to two vertices of degree two, then $x \in {\rm Shed}(G)$ by Lemma \ref{PCShedding}.
So suppose that there exists $y \in V(B)$ adjacent to $x$ such that $y$ has degree at least three.
Because $B$ is a basic $5$-cycle, 
$y$ must be adjacent to two vertices of degree two.
By Lemma~\ref{PCShedding}, $y \in {\rm Shed}(G)$.  Hence $x$ is adjacent
to a shedding vertex. Therefore every vertex in $C$ is a shedding vertex of $G$ or
adjacent to one.
\end{proof}


\section{Three new vertex decomposable graphs}\label{newgraphs}

In this section we will construct three infinite family of graphs.
Each family will have the property that all
members are pure vertex decomposable, but ${\rm Shed}(G)$ is {\bf not}
a dominating set. In particular, for each construction, the vertices in 
${\rm Shed}(G)$ are part of a clique of vertices $Z$, none of which
is adjacent to any vertex in a non-empty set $X$.

\subsection{Construction 1} 
Fix $m$ integers $k_i \geq 2$, and suppose that
$k_1 + \cdots + k_m = n$.   We define 
$D_n(k_1,\ldots,k_m)$ to be the graph on the $5n$ vertices
\[V = X \cup Y \cup Z = 
\{ x_1,\ldots,x_{2n} \} \cup \{ y_1,\ldots,y_{2n} \}
\cup \{ z_1,\ldots,z_n \}\]
with the edge set given by the following conditions:
\begin{enumerate}
\item[$(i)$] the induced graph on $Z$ is a complete graph $K_n$;
\item[$(ii)$] $Y$ is an independent set, i.e., $G[Y] = \overline{K_{2n}}$, where
$\overline{H}$ denotes the complement of the graph $H$;
\item[$(iii)$] the induced graph $G[X]$ is $K_{k_1,k_1} \sqcup \cdots \sqcup K_{k_m,k_m}$
where the vertices of $G[X]$ are labeled so that the $i$-th complete bipartite
graph has bipartition 
\[\{ x_{2w+1},x_{2w+3},\ldots, x_{2(w+k_i)-1} \} \cup \{x_{2w+2},x_{2w+4},\ldots, x_{2(w+k_i)}\}\]
 with $w=\sum_{\ell=1}^{i-1} k_\ell$ where $w = 0$ if $i=1$;
\item[$(iv)$] $\{x_j,y_j\}$ are edges for $1 \leq j \leq 2n$; and 
\item[$(v)$] $\{z_j,y_{2j}\}$ and $\{z_j,y_{2j-1}\}$
are edges for $1 \leq j \leq n$.
\end{enumerate}

Roughly speaking, the graph $D_n(k_1,\ldots,k_m)$ is formed by ``joining'' 
$m$ complete bipartite graphs to a complete graph $K_n$ by first passing through
an independent set of vertices $Y$.  
Going forward, it is useful to make the observation that the induced graph $G[X \cup Y]$ has
a perfect matching given by the edges $\{x_j,y_j\}$ for $j = 1,\ldots, 2n$.

\begin{example}
To illustrate our construction, the graph $D_5(2,3)$ is given
in Figure \ref{D5}.

\begin{figure}[ht]
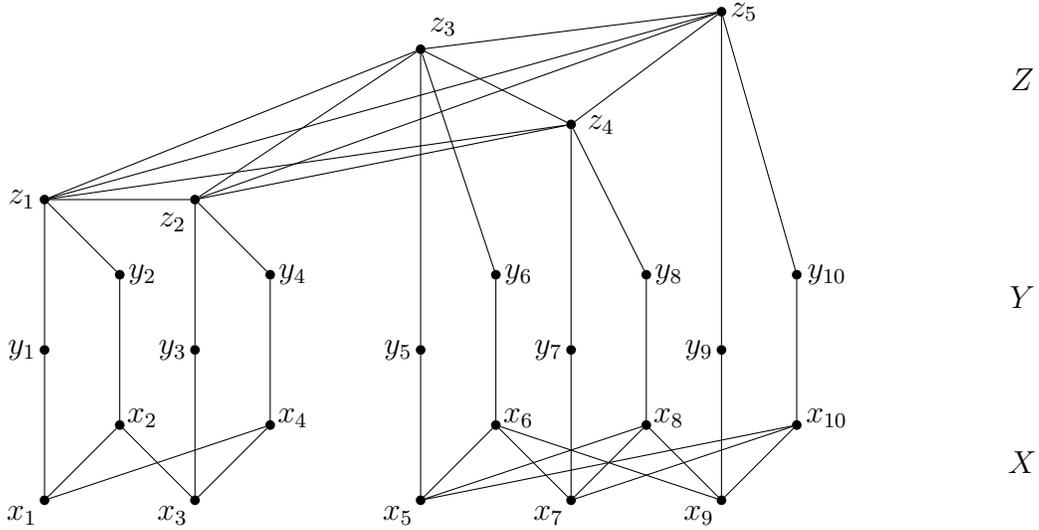

\tikzpicture
\node (x1) at (-5,-2)[place]{};
\node (x1b) at (-5.3, -2.2){$x_1$};
\node (x3) at (-3,-2)[place]{};
\node (x3b) at (-3.3, -2.2){$x_3$};
\node (x5) at (0,-2)[place]{};
\node (x5b) at (-0.3, -2.2){$x_5$};
\node (x7) at (2,-2)[place]{};
\node (x7b) at (1.7, -2.2){$x_7$};
\node (x9) at (4,-2)[place]{};
\node (x9b) at (3.7, -2.2){$x_9$};
\node (x2) at (-4,-1)[place]{};
\node (x2b) at (-3.7, -0.9){$x_2$};
\node (x4) at (-2,-1)[place]{};
\node (x4b) at (-1.7, -0.9){$x_4$};
\node (x6) at (1,-1)[place]{};
\node (x6b) at (1.3, -0.9){$x_6$};
\node (x8) at (3,-1)[place]{};
\node (x8b) at (3.3, -0.9){$x_8$};
\node (x10) at (5,-1)[place]{};
\node (x10b) at (5.4, -0.9){$x_{10}$};
\node (y1) at (-5,0)[place]{};
\node (y1b) at (-5.3, 0){$y_1$};
\node (y3) at (-3,0)[place]{};
\node (y3b) at (-3.3, 0){$y_3$};
\node (y5) at (0,0)[place]{};
\node (y5b) at (-0.3, 0){$y_5$};
\node (y7) at (2,0)[place]{};
\node (y7b) at (1.7, 0){$y_7$};
\node (y9) at (4,0)[place]{};
\node (y9b) at (3.7, 0){$y_9$};
\node (y2) at (-4,1)[place]{};
\node (y2b) at (-3.7, 1){$y_2$};
\node (y4) at (-2,1)[place]{};
\node (y4b) at (-1.7, 1){$y_4$};
\node (y6) at (1,1)[place]{};
\node (y6b) at (1.3, 1){$y_6$};
\node (y8) at (3,1)[place]{};
\node (y8b) at (3.3, 1){$y_8$};
\node (y10) at (5,1)[place]{};
\node (y10b) at (5.4, 1){$y_{10}$};
\node (z1) at (-5,2)[place]{};
\node (z1b) at (-5.3, 2){$z_1$};
\node (z2) at (-3,2)[place]{};
\node (z2b) at (-3.3, 1.7){$z_2$};
\node (z3) at (0,4)[place]{};
\node (z3b) at (0.3, 4.3){$z_3$};
\node (z4) at (2,3)[place]{};
\node (z4b) at (2.4, 3){$z_4$};
\node (z5) at (4,4.5)[place]{};
\node (z5b) at (4.3, 4.5){$z_5$};
\draw (x1) to (x2);
\draw (x1) to (x4);
\draw (x3) to (x2);
\draw (x3) to (x4);
\draw (x5) to (x6);
\draw (x5) to (x8);
\draw (x5) to (x10);
\draw (x7) to (x6);
\draw (x7) to (x8);
\draw (x7) to (x10);
\draw (x9) to (x6);
\draw (x9) to (x8);
\draw (x9) to (x10);
\draw (x1) to (y1);
\draw (x2) to (y2);
\draw (x3) to (y3);
\draw (x4) to (y4);
\draw (x5) to (y5);
\draw (x6) to (y6);
\draw (x7) to (y7);
\draw (x8) to (y8);
\draw (x9) to (y9);
\draw (x10) to (y10);
\draw (y1) to (z1);
\draw (y3) to (z2);
\draw (y5) to (z3);
\draw (y7) to (z4);
\draw (y9) to (z5);
\draw (y2) to (z1);
\draw (y4) to (z2);
\draw (y6) to (z3);
\draw (y8) to (z4);
\draw (y10) to (z5);
\draw (z1) to (z2);
\draw (z1) to (z3);
\draw (z1) to (z4);
\draw (z1) to (z5);
\draw (z2) to (z3);
\draw (z2) to (z4);
\draw (z2) to (z5);
\draw (z3) to (z4);
\draw (z3) to (z5);
\draw (z4) to (z5);
\node (Z) at (8, 3.6){$Z$};
\node (Y) at (8, 0.7){$Y$};
\node (X) at (8, -1.5){$X$};
\endtikzpicture 
\caption{The graph $D_{5}(2,3)$.}\label{D5}
\end{figure}
\end{example}

We now show that the graphs $D_n(k_1,\ldots,k_m)$ are all well-covered.
In what follows, we write $\alpha(G)$ to denote the cardinality of a maximal
independent set in $G$.

\begin{lemma}
\label{ClassWellCovered}
The graph $D_n(k_1,\ldots,k_m)$ is well-covered.
\end{lemma}

\begin{proof}  Let $G=D_{n}(k_1,\ldots,k_m)$.  It suffices to show that
every maximal independent set has the same cardinality.

We can partition $V$ into $n$ sets of five vertices, namely, 
$\{x_{2i-1},x_{2i},y_{2i-1},y_{2i},z_{i} \}$ for $1 \leq i \leq n$.
The induced graph on each such set  
is a five cycle. Since $\alpha(C_5)= 2$, it follows that $\alpha(G)\leq 2n$.
On the other hand, $Y$ is a maximal independent set of vertices with $|Y| = 2n$,
so $\alpha(G) = 2n$.

Let $H$ be any maximal independent set with $|H| < 2n$.
If $H \cap Z = \emptyset$, then because there are $2n$ edges of the form
$\{x_j,y_j\}$, there exists an $i$ such that neither $x_i$ nor $y_i$ belong to
$H$.  But then $H \cup \{y_i\}$ is an independent set  since $y_i$ 
is only adjacent to a vertex in $Z$ and $x_i$.  This contradicts the fact that
$H$ is a maximal independent set.

So, there exists a $z_i \in H \cap Z$.  Because $G[Z]$ is a complete graph, $H \cap Z = \{z_i\}$.
Thus each edge $\{x_{j},y_{j}\}$ for $j \neq 2i$ or $2i-1$ has a vertex in $H$, 
otherwise $H\cup \{y_{j}\}$ is a larger independent set.  Because $|H| \leq 2n-1$,
we have already accounted
for all the vertices in $H$.  
So, neither $x_{2i}$ nor $x_{2i-1}$ are in $H$.
Hence $x_{2i}$, respectively $x_{2i-1}$, is adjacent to some vertex $x_{l}\in H$,
respectively $x_{k} \in H$. Further, $x_{2i-1},x_l,x_{2i},x_k$ all 
belong to the same complete bipartite graph  $K_{k_r,k_r}$.
Then $l$ must be odd since $2i$ is even and $k$ must be even since $2i-1$ is odd.
However, then 
$x_k$ is adjacent to $x_l$, 
contradicting
the fact that $x_k,x_l \in H$.
Thus $H$ cannot be a maximal independent set if $|H| <2n$, and so
every maximal independent set has cardinality $2n$.  Therefore $G$ is well-covered.
\end{proof}

We now show that any graph made via our construction is pure vertex decomposable, and furthermore,
we determine its set of shedding vertices.

\begin{theorem}
\label{ClassDOnlyKN}
If $G = D_n(k_1,\ldots,k_m)$ then $G$ is pure vertex decomposable and ${\rm Shed}(G) = Z$.
\end{theorem}

\begin{proof}
Let $G=D_{n}(k_1,\ldots,k_m)$.   
By Lemma \ref{ClassWellCovered}, $G$ is well-covered.
We show that $G$ is pure vertex decomposable by first working through four 
claims.

\noindent
{\it Claim 1:}  For each $i = 1,\ldots,n$, $G_i = (((G \setminus z_1) \setminus z_2) \cdots \setminus z_i)$
is a well-covered graph.

\noindent
Fix some $i \in \{1,\ldots,n\}$.
Let $H$ be any maximal independent set of $G_i$.  
Since $\{x_1,x_2\},\ldots,$ $\{x_{2i-1},x_{2i}\}$
are  edges of $G_i$, for each $j=1,\ldots,i$, $H$ contains
at most one of $x_{2j-1}$ and $x_{2j}$.  
Then $H$ contains at least one
of $y_{2j-1}$ or $y_{2j}$ for each $j=1,\ldots,i$, since $H$ is maximal and
$y_{2j-1}$ and $y_{2j}$ are leaves in $G_i$. 
But then 
$H$ is also a maximal independent set of $G$ since each vertex $z_1,\ldots,z_i$ of $G$
is adjacent to at least one vertex in $H$. 
Because $G$ is well-covered,
$|H| = \alpha(G)$.  So $G_i$ is also well-covered. 

\noindent
{\it Claim 2:} The graph $G_n$ is pure vertex decomposable.

\noindent
The graph $G_n$ is the same as the induced graph $G[X \cup Y]$.  
So $G_n$ is the graph of $m$ disjoint graphs, where the $j$-th connected component
is the complete bipartite graph $K_{k_j,k_j}$ with whiskers at every vertex.  
Now use
Corollary ~\ref{whiskercor} and Lemma~\ref{union} to finish the proof.

\noindent
{\it Claim 3:} For each $i=1,\ldots,n$, $N_i = G_{i-1} \setminus N[z_i]$ is a well-covered graph.

\noindent
For a fixed $i$, suppose that $x_{2i-1}$ and $x_{2i}$ appear in
the complete bipartite graph $K_{k_j,k_j}$.  Then the graph $N_i$ consists
of $m$ disjoint graphs: $m-1$ of these graphs
are the complete bipartite graphs 
with whiskers at every vertex, and the $m$-th graph is the
graph $K_{k_j,k_j}$ with whiskers at every vertex except $x_{2i-1}$ and $x_{2i}$.
Note that $m-1$ graphs are well-covered as was argued in Claim 2. 
The $m$-th graph is also well-covered:  let $S=V(K_{k_j,k_j}\setminus \{ x_{2i-1},x_{2i} \})$
and apply Theorem~\ref{whiskers} to $K_{k_j,k_j} \cup W(S)$.
Therefore $N_i$ is well-covered.

\noindent
{\it Claim 4:} For each $i=1,\ldots,n$, $N_i$ is pure vertex decomposable.

\noindent
As shown in the previous proof, $N_i$ is made up
of $m$ disjoint graphs, where each graph is either a complete bipartite
graph with whiskers at every vertex, or a complete bipartite graph
with whiskers at every vertex except at two adjacent vertices.  It follows
from Theorem \ref{whiskers} that in both cases, each disjoint
graph is pure vertex decomposable.  By Lemma \ref{union}, it then follows that
$N_i$ is pure vertex decomposable.

Thus we have established Claims 1--4. 
By definition, $G$ is pure vertex decomposable if we can show that $G_1$ and $N_1$ are 
pure vertex decomposable.  But $G_1$ is 
pure vertex decomposable if we can show that $G_2$ and $N_2$ are vertex
decomposable.  Continuing in this fashion, to show that $G$ is pure vertex decomposable,
it suffices to show that $G_n$ and $N_1,\ldots,N_n$ are all pure vertex decomposable.
But this was shown in Claims 1--4.  So $G$ is pure vertex decomposable.

We next observe that ${\rm Shed}(G) = Z$.  Note that to show $G$ is pure vertex decomposable,
we showed that $z_1 \in {\rm Shed}(G)$.  
By graph symmetry, $z_j \in {\rm Shed}(G)$ for any $z_j \in Z$.  So
$Z \subseteq {\rm Shed}(G)$.

Next, we show $Y \cap {\rm Shed}(G) = \emptyset$.  Let $y \in Y$. 
 After relabeling, assume that $y = y_{2n}$.
Then $\{y_1,\ldots,y_{2n-1},x_{2n}\}$ and
$\{z_{1},x_{1}, y_{3},\ldots,y_{2n-2}, x_{2n-1}\}$
are maximal independent sets, in $G\setminus y$, of cardinality $2n$ and $2n-1$ respectively. 
Thus $G \setminus y$ is not well-covered and so $y\not\in {\rm Shed}(G)$.

Finally, we show that $X \cap {\rm Shed}(G) = \emptyset$.  Again, we 
show that for any $x \in X$, the graph $G \setminus x$ is not well-covered.
After relabeling, assume $x = x_1$.
The set $Y$ is an independent set of $G \setminus x$ of cardinality $2n$.
Note that since $k_{1} \geq 2$, the  vertex $x_{3}$  is adjacent to $x_{2}$ and $x_4$.
It follows that $L=\{z_1,x_3, y_{4},\ldots,y_{2n}\}$ is a maximal independent set of $G\setminus x$ 
with $2n-1$ vertices.

Thus ${\rm Shed}(G) = Z$, as desired.
\end{proof}

The graphs constructed in this subsection give us the first family of graphs $G$
for which ${\rm Shed}(G)$ is not a dominating set, 
 since no vertex in $X$ is adjacent to any
vertex in $Z$.

\begin{corollary}
If $G = D_n(k_1,\ldots,k_m)$ then ${\rm Shed}(G)$ is not a dominating set.
\end{corollary}


\subsection{Construction 2}

Next we construct a graph $G=F_{m}$ with vertex set $V=X\cup Y\cup Z$ with $X= \{ x_1,\ldots,x_{2m} \}$, 
$Y= \{ y_1,y_2\}$, and $Z= \{ z_1,z_2,z_3 \}$
and  edge set given by the following conditions:
\begin{enumerate}
\item[$(i)$] the induced subgraph $G[X]$  
is the $m$-partite graph $K_{2,2,\ldots,2}$, whose
  complement is the matching with edges $\{x_{2i-1},x_{2i}\}$, $1\leq i \leq m$;
\item[$(ii)$] $y_1$ is adjacent to $z_1$ and each $x_{2i-1}$, $1\leq i \leq m$; 
\item[$(iii)$] $y_2$ is adjacent to $z_2$ and each $x_{2i}$ for $1\leq i \leq m$; and
\item[$(iv)$] the induced subgraph on $Z$ is $K_{3}$.
\end{enumerate}

Note that if we let $X_1 =\{x_1,x_3,\ldots, x_{2m-1}\} \cup \{ y_1\}$ and
$X_2 =\{x_2,x_4,\ldots, x_{2m}\} \cup \{ y_2\}$, then 
$G[X_1]$ and $G[X_2]$ are both cliques isomorphic to $K_{m+1}$. 

\begin{example}
Two examples of Construction 2 are drawn below.  In particular,
the graph $F_2$ is in Figure \ref{PD2}, and the graph $F_3$
is drawn in Figure \ref{PD3}.
\begin{figure}[ht]
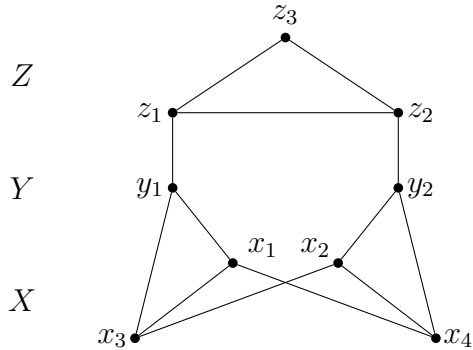

\tikzpicture
\node (1) at (0,3)[place]{};
\node (1b) at (0,3.3){$z_3$};
\node (2) at (-1.5,2)[place] {};
\node (2b) at (-1.8, 2){$z_1$};
\node (3) at (1.5,2)[place] {};
\node (3b) at (1.8,2){$z_2$};
\node (4) at (-1.5,1)[place] {};
\node (4b) at (-1.8, 1){$y_1$};
\node (5) at (1.5,1)[place]{};
\node (5b) at (1.8,1){$y_2$};
\node (6) at (-0.7,0)[place]{};
\node (6b) at (-0.3, 0.2){$x_1$};
\node (7) at (-2,-1)[place]{};
\node (7b) at (-2.3, -1){$x_3$};
\node (8) at (2,-1)[place]{};
\node (8b) at (2.3, -1){$x_4$};
\node (9) at (0.7,0)[place]{};
\node (9b) at (0.4, 0.2){$x_2$};
\node (10b) at (-3.5, 2.5){$Z$};
\node (10b) at (-3.5, 1){$Y$};
\node (10b) at (-3.5, -0.5){$X$};
\draw  (1) to (2);
\draw  (2) to (3);
\draw  (1) to (3);
\draw  (2) to (4);
\draw  (3) to (5);
\draw  (4) to (6);
\draw  (4) to (7);
\draw  (5) to (8);
\draw  (5) to (9);
\draw  (6) to (7);
\draw  (6) to (8);
\draw  (8) to (9);
\draw  (9) to (7);
\endtikzpicture 
\caption{The graph $F_{2}$.}\label{PD2}
\end{figure}

\begin{figure}[ht]
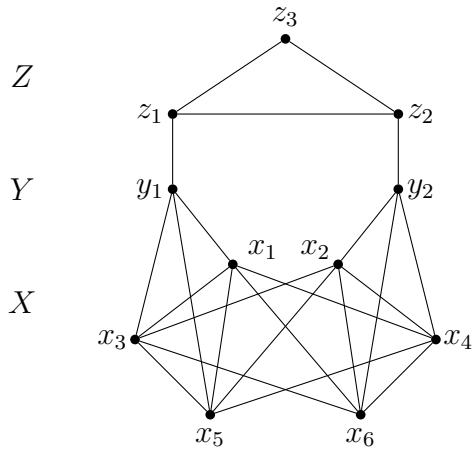

\tikzpicture
\node (1) at (0,3)[place]{};
\node (1b) at (0,3.3){$z_3$};
\node (2) at (-1.5,2)[place] {};
\node (2b) at (-1.8, 2){$z_1$};
\node (3) at (1.5,2)[place] {};
\node (3b) at (1.8,2){$z_2$};
\node (4) at (-1.5,1)[place] {};
\node (4b) at (-1.8, 1){$y_1$};
\node (5) at (1.5,1)[place]{};
\node (5b) at (1.8,1){$y_2$};
\node (6) at (-0.7,0)[place]{};
\node (6b) at (-0.3, 0.2){$x_1$};
\node (x5) at (-1,-2)[place]{};
\node (x5b) at (-1,-2.3){$x_5$};
\node (x6) at (1, -2)[place]{};
\node (x6b) at (1,-2.3){$x_6$};
\draw (x5) to (4);
\draw (x5) to (6);

\draw (x6) to (5);
\draw (x6) to (6);

\node (7) at (-2,-1)[place]{};
\node (7b) at (-2.3, -1){$x_3$};
\node (8) at (2,-1)[place]{};
\node (8b) at (2.3, -1){$x_4$};
\node (9) at (0.7,0)[place]{};
\node (9b) at (0.4, 0.2){$x_2$};

\draw (x5) to (7);
\draw (x5) to (9);
\draw (x5) to (8);

\draw (x6) to (7);
\draw (x6) to (9);
\draw (x6) to (8);

\node (10b) at (-3.5, 2.5){$Z$};
\node (10b) at (-3.5, 1){$Y$};
\node (10b) at (-3.5, -0.5){$X$};

\draw  (1) to (2);
\draw  (2) to (3);
\draw  (1) to (3);
\draw  (2) to (4);
\draw  (3) to (5);
\draw  (4) to (6);
\draw  (4) to (7);
\draw  (5) to (8);
\draw  (5) to (9);
\draw  (6) to (7);
\draw  (6) to (8);
\draw  (8) to (9);
\draw  (9) to (7);
\endtikzpicture 
\caption{The graph $F_{3}$.}\label{PD3}
\end{figure}
\end{example}

\begin{theorem}
\label{PDClassWellCovered}
The graph $F_{m}$ is well-covered for $m \geq 2$.
\end{theorem}

\begin{proof}
Note that we can partition the vertex set of $G=F_{m}$ 
into $X_{1},X_2$ and $Z$. Further,
$G[X_1]$, $G[X_2]$ and $G[Z]$ are all complete graphs. 
Hence, any maximal independent set will have cardinality $3$ or fewer. 
Let $H$ be an independent set of $G$. 
Suppose $Z \cap H =\emptyset$. Then $H \cup \{z_3\}$ is an independent set since $z_3$ 
is only adjacent to vertices in $Z$. Thus $Z \cap H \neq \emptyset$. 
Suppose $X_1 \cap H =\emptyset$. 
If $y_2$ is in $H$ or $H\cap X_2 = \emptyset$, 
let $x=x_1$. Otherwise let $x=x_{2k-1}$ if $x_{2k}$ is a vertex in $H$.
Then $H \cup \{x\}$ is an independent set. Thus $X_1 \cap H \neq \emptyset$
and by symmetry $X_2 \cap H \neq \emptyset$.

Therefore all maximal independent sets of $G$ must have cardinality $3$, so $F_{m}$ is well-covered.
\end{proof}

\begin{lemma}
\label{VDTwoAttached}
Given $m\geq 2$, if $G=F_m$, then $G[X \cup Y]$ is pure vertex decomposable.
\end{lemma}

\begin{proof}
Since $G[X \cup Y]$ is a clique-whiskered graph, it  
is pure vertex decomposable by \cite[Theorem~3.3]{CN}.
\end{proof}

\begin{lemma}
\label{VDOneAttached}
Given $m\geq 2$, and $G=F_m$. Let $S=X \cup \{y_1\}$. Then
$G[S]$ 
is pure vertex decomposable. 
\end{lemma}

\begin{proof}
Let $H=G[S]$.
Note that $y_1$ is a simplicial vertex of $H$.
Let $x$ be a vertex adjacent to $y_1$.
The graph $H \setminus N_{H}[x]$ is a single isolated vertex and hence is pure vertex decomposable. 

Note that $H$ is well-covered with $\alpha(H)=2$.  
Thus $H \setminus x$ is well-covered by Lemma \ref{wellcoveredsubgraph}. 
Using Lemma~\ref{wellcoveredsubgraph} we can 
continue to remove vertices adjacent to $y_1$ while maintaining a well-covered graph
until we obtain the graph with isolated vertex $y_1$ and complete graph on vertex set $X_2\setminus y_2$.
This resultant graph is a union of two complete graphs and hence is pure vertex decomposable by 
Lemma~\ref{union}. Therefore $H \setminus x$ is pure vertex decomposable.
Since $H \setminus N_{H}[x]$ is an isolated vertex, it is pure vertex decomposable.
Therefore $x$ is a shedding vertex of $H$  and $H$ is pure vertex decomposable.  
\end{proof}

Given $\alpha=\alpha(G)$, define $i_r$ to be the number of
independent sets of $G$ of cardinality $r$ for $1\leq r\leq \alpha$
with $i_0=1$.
Define the $h$-vector
$h_G=(h_0,h_1,\ldots,h_{\alpha})$ by 
\[
h_k = \sum_{r = 0}^k (-1)^{k-r}\binom{\alpha -r}{k-r}i_{r} \, .
\]
As noted  in \cite[Theorem 5.4.8]{V2001}, 
if a graph is Cohen-Macaulay, then the $h$-vector is a non-negative 
vector.\footnote{Note that the $f$-vector $(f_0,f_1,\ldots,f_{\alpha-1})$ 
described in \cite{V2001} is $(i_1,i_2,\ldots, i_{\alpha})$
with $f_{-1}=1$. } Since every pure vertex decomposable graph is Cohen-Macaulay, we have
the following restatement 
which we will use to 
limit the cardinality of Shed($F_m$). 

\begin{lemma}[{\cite[Theorem 5.4.8]{V2001}}]\label{hvec}
If $G$ is a pure vertex decomposable graph, then $h_G$ is a non-negative vector.
\end{lemma}

\begin{theorem}\label{pmfamily}
For all $m\geq 2$, $F_{m}$ is pure vertex decomposable and ${\rm Shed}(F_{m})= \{z_{1},z_{2}\}$.
\end{theorem}

\begin{proof}
We first show that if $v\not\in \{z_1,z_2\}$ then $F_m \setminus v$ is not
pure vertex decomposable. 

Suppose that $v \in X$. By the symmetry of the graph, 
we can assume $v=x_1$. Then
$\{ y_1,y_2,z_3\}$ and $\{x_2,z_1\}$ are maximal independent sets
of different cardinality in $F_m \setminus v$. Thus $F_m \setminus v$ is not
well-covered and hence not pure vertex decomposable.

Next we consider a vertex in $v\in Y$. By symmetry, assume $v=y_1$. 
We will show that $F_{m} \setminus v$ is not vertex 
decomposable by showing that its $h$-vector has a negative entry.
We first calculate the number $i_r$ of independent sets of cardinality $r$ in $F_{m} \setminus v$,
for $1\leq r \leq \alpha$.
Note that $\alpha(F_{m} \setminus v)=3$. 
There are $2m+4$ vertices in $F_{m} \setminus v$ so $i_1=2m+4$.
An independent set of cardinality 2 can be of the form $\{y_2,x_i\}$, $\{y_2,z\}$
$\{z,x_i\}$ or $\{x_i,x_j\}$ for some $x_i,x_j\in X$ and $z\in Z$. There are
$m$, $2$, $6m$ and $m$ such different independent sets respectively. 
Thus $i_2=8m+2$.
An independent set of cardinality 3 must have one vertex in $Z$, one in $X_2$ and one in $X_1\setminus y_1$
since these sets partition the vertex set, and induce complete subgraphs, of $F_m \setminus v$ .
There are $m$ maximal independent sets containing $z_2$ and
for each $z\in Z \setminus z_2$, there are $2m$ maximal independent sets containing $z$.
Thus $i_3=5m$.
Therefore 
$(i_0,i_1,i_2,i_3)=(1,2m+4,8m+2,5m)$.
But this implies that the $h$-vector has 
$h_3=1-m$.
Hence $h_3 < 0$ for $m>1$ and by Lemma~\ref{hvec}, $F_{m}\setminus v$  is not pure vertex decomposable.
Thus no vertex in $Y$ can be a shedding vertex of $F_m$ if $F_m$ is pure vertex decomposable.

Since $\{z_1, x_1,x_2\}$ and $\{y_1,y_2\}$ are maximal independent sets
with different cardinalities in $F_m \setminus z_3$, $F_m \setminus z_3$ is not
well-covered and hence not pure vertex decomposable. 

Therefore, if $F_m\setminus v$ is pure vertex decomposable, then $v\in \{z_1,z_2\}$.

Now suppose $v=z_1$. We claim that $F_m \setminus v$ is pure vertex decomposable.
The graph $F_{m} \setminus N_{F_{m}}[z_{1}]$ 
is the graph $G[S]$ described in Lemma \ref{VDOneAttached} and so it is pure vertex decomposable
and hence well-covered.

Next we claim that the graph $G=F_{m} \setminus z_{1}$ is well-covered.
We can partition the vertices of $G$ into the sets 
$Z \setminus z_{1} \cup X_1 \cup X_2$. 
Since each part in the vertex partition induces a complete graph, 
we can construct an independent set of cardinality at most $3$. Thus $\alpha(G)\leq 3$.
Using an argument similar to Lemma~\ref{PDClassWellCovered}, one can 
show that every maximal independent set of $G$ is of cardinality 3
and hence $G=F_{m} \setminus z_{1}$ is well-covered.

We show that $G=F_{m} \setminus z_{1}$ 
is pure vertex decomposable by showing that $z_{2}$
is a shedding vertex of $G$.
First $G
\setminus N_{F_{m}}[z_{2}] = F_{m} \setminus  N_{F_{m}}[z_{2}]$ since $z_{1}$ is adjacent to $z_{2}$, 
and $F_{m} \setminus  N_{F_{m}}[z_{2}]$ is isomorphic to 
$F_{m} \setminus  N_{F_{m}}[z_{1}]$. Thus $G\setminus N_{F_{m}}[z_{2}]$
 is pure vertex decomposable. 
Next, $G\setminus z_2=F_{m} \setminus \{z_{1},z_{2}\}$ 
has an isolated vertex $z_{3}$ and a component described
in Lemma~\ref{VDTwoAttached} and so is pure vertex decomposable by Lemma~\ref{union}.

Therefore $F_{m} \setminus N_{F_{m}}[z_{1}]$ and $F_{m} \setminus z_{1}$ 
are well-covered, so $F_{m}$ is pure vertex decomposable
and it follows that $z_{1}$ (and $z_2$ by symmetry) are shedding vertices
of $F_m$.
\end{proof}

\begin{corollary}\label{corP}
For all $m \geq 2$,  ${\rm Shed}(F_{m})$ is not a dominating set.
\end{corollary}

\begin{proof}
Since each vertex in $X$ is not adjacent to a shedding vertex of $F_{m}$, 
${\rm Shed}(F_{m})$ is not a dominating set.
\end{proof}

\subsection{Construction 3}

We finish this section by describing another family of pure vertex decomposable
graphs whose set of shedding vertices fails to be a dominating set.
Unlike the previous constructions, for the sake of brevity, we only sketch out the details 
of the proof.

Fix an integer $n \geq 1$.  
Let 
\begin{eqnarray*}
X&=&\{x_{1,1},x_{1,2}\}\cup \{x_{2,1},x_{2,2}\} \cup \ldots \cup \{ x_{n,1},x_{n,2}\}, \\
Y&=& \{y_{1,1},y_{1,2},y_{1,3}\}\cup\{y_{2,1},y_{2,2},y_{2,3}\}\cup 
\ldots \cup \{y_{n,1},y_{n,2},y_{n,3}\}, \mbox{\quad {\rm{and}}}\\
Z&=&\{z_{1,1},z_{1,2},z_{1,3}\}\cup\ldots \cup\{z_{n,1},z_{n,2},z_{n,3}\}.
\end{eqnarray*}
We define the graph $L_n$ to be the 
graph on $8n+1$ vertices $V=X \cup Y\cup Z \cup \{w\}$.
with the edge set given by the following conditions:
\begin{enumerate}
\item[$(i)$] for each $i=1,\ldots,n$, the induced graph on $\{x_{i,1},x_{i,2},y_{i,1},y_{i,2},y_{i,3}\}$
is a $5$-cycle with edges $\{y_{i,1},y_{i,2}\},\{y_{i,2},y_{i,3}\},\{y_{1,3},x_{i,2}\},\{x_{i,2},x_{i,1}\},
\{x_{i,1},y_{i,1}\}$;
\item[$(ii)$] $\{z_{i,1},y_{i,1}\},\{z_{i,2},y_{i,2}\}$, and $\{z_{i,3},y_{i,3}\}$ are 
edges for $i=1,\ldots,n$, forming a matching between $Y$ and $Z$; and
\item[$(iii)$] the induced graph on
$Z \cup \{w\}$ is
the complete graph $K_{3n+1}$.
\end{enumerate}

\begin{example}
The graph $L_1$ is given in Figure \ref{EL1} and $L_2$ in Figure \ref{L2}.

\begin{figure}[ht]
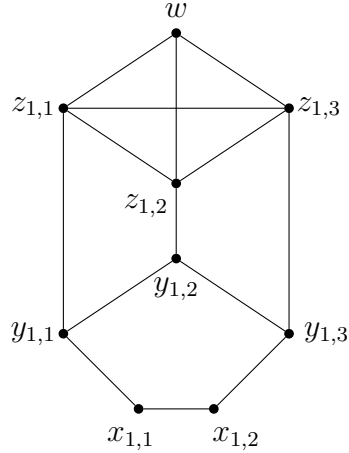

\tikzpicture

\node (x11) at (-3,-3)[place]{};
\node (x11b) at (-3.1, -3.4){$x_{1,1}$};
\node (x12) at (-2,-3)[place]{};
\node (x12b) at (-1.7, -3.4){$x_{1,2}$};
\node (y11) at (-4,-2)[place]{};
\node (y11b) at (-4.4, -2){$y_{1,1}$};
\node (y12) at (-2.5,-1)[place]{};
\node (y12b) at (-2.5, -1.4){$y_{1,2}$};
\node (y13) at (-1,-2)[place]{};
\node (y13b) at (-0.5, -2){$y_{1,3}$};

\draw (x11) to (y11);
\draw (x11) to (x12);
\draw (x12) to (y13);
\draw (y13) to (y12);
\draw (y12) to (y11);

\node (z11) at (-4, 1)[place]{};
\node (z11b) at (-4.4, 1){$z_{1,1}$};
\node (z12) at (-2.5, 0)[place]{};
\node (z12b) at (-2.9, -0.3){$z_{1,2}$};
\node (z13) at (-1, 1)[place]{};
\node (z13b) at (-0.6, 1){$z_{1,3}$};

\draw (y11) to (z11);
\draw (y12) to (z12);
\draw (y13) to (z13);

\node (w) at (-2.5,2)[place]{};
\node (wb) at (-2.5,2.3){$w$};

\draw (z11) to (z12);
\draw (z11) to (z13);
\draw (z11) to (w);

\draw (z12) to (z13);
\draw (z12) to (w);

\draw (z13) to (w);
\endtikzpicture 
\caption{The graph $L_1$.}\label{EL1}
\end{figure}

\begin{figure}[ht]
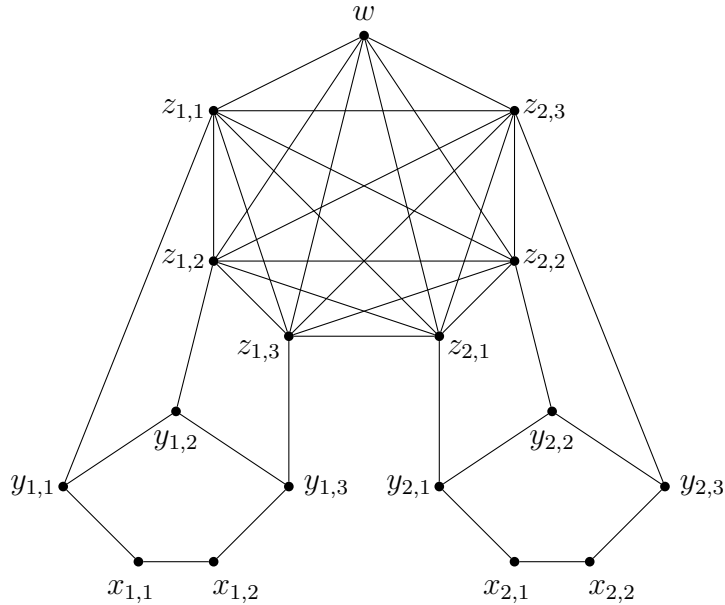

\tikzpicture

\node (x11) at (-3,-3)[place]{};
\node (x11b) at (-3.1, -3.4){$x_{1,1}$};
\node (x12) at (-2,-3)[place]{};
\node (x12b) at (-1.7, -3.4){$x_{1,2}$};
\node (y11) at (-4,-2)[place]{};
\node (y11b) at (-4.4, -2){$y_{1,1}$};
\node (y12) at (-2.5,-1)[place]{};
\node (y12b) at (-2.5, -1.4){$y_{1,2}$};
\node (y13) at (-1,-2)[place]{};
\node (y13b) at (-0.5, -2){$y_{1,3}$};

\draw (x11) to (y11);
\draw (x11) to (x12);
\draw (x12) to (y13);
\draw (y13) to (y12);
\draw (y12) to (y11);

\node (x21) at (2,-3)[place]{};
\node (x21b) at (1.9, -3.4){$x_{2,1}$};
\node (x22) at (3,-3)[place]{};
\node (x22b) at (3.3, -3.4){$x_{2,2}$};
\node (y21) at (1,-2)[place]{};
\node (y21b) at (0.6, -2){$y_{2,1}$};
\node (y22) at (2.5,-1)[place]{};
\node (y22b) at (2.5, -1.4){$y_{2,2}$};
\node (y23) at (4,-2)[place]{};
\node (y23b) at (4.5, -2){$y_{2,3}$};

\draw (x21) to (y21);
\draw (x21) to (x22);
\draw (x22) to (y23);
\draw (y23) to (y22);
\draw (y22) to (y21);

\node (z11) at (-2, 3)[place]{};
\node (z11b) at (-2.4, 3){$z_{1,1}$};
\node (z12) at (-2, 1)[place]{};
\node (z12b) at (-2.4, 1){$z_{1,2}$};
\node (z13) at (-1, 0)[place]{};
\node (z13b) at (-1.4, -0.2){$z_{1,3}$};

\draw (y11) to (z11);
\draw (y12) to (z12);
\draw (y13) to (z13);

\node (z23) at (2, 3)[place]{};
\node (z23b) at (2.4, 3){$z_{2,3}$};
\node (z22) at (2, 1)[place]{};
\node (z22b) at (2.4, 1){$z_{2,2}$};
\node (z21) at (1, 0)[place]{};
\node (z21b) at (1.4, -0.2){$z_{2,1}$};

\draw (y21) to (z21);
\draw (y22) to (z22);
\draw (y23) to (z23);

\node (w) at (0,4)[place]{};
\node (wb) at (0,4.3){$w$};

\draw (z11) to (z12);
\draw (z11) to (z13);
\draw (z11) to (z21);
\draw (z11) to (z22);
\draw (z11) to (z23);
\draw (z11) to (w);

\draw (z12) to (z13);
\draw (z12) to (z21);
\draw (z12) to (z22);
\draw (z12) to (z23);
\draw (z12) to (w);

\draw (z13) to (z21);
\draw (z13) to (z22);
\draw (z13) to (z23);
\draw (z13) to (w);

\draw (z21) to (z22);
\draw (z21) to (z23);
\draw (z21) to (w);

\draw (z22) to (z23);
\draw (z22) to (w);

\draw (z23) to (w);

\endtikzpicture 
\caption{The graph $L_2$.}\label{L2}
\end{figure}

\end{example}

We then have the following theorem, whose proof we only sketch.

\begin{theorem}
\label{EClassWellCovered}
For any integer $n \geq 1$,  $L_n$ is pure vertex decomposable,
but ${\rm Shed}(L_n)$ is not a dominating set.
\end{theorem}

\begin{proof}
Suppose $G = L_n$.
To show that $G$ is well-covered, 
show that every maximal independent set has  cardinality $2n+1$.  

To show that 
$G$ is pure vertex decomposable, one can do induction on $n$. 
For $n =1$, one can show that $G$ is pure vertex decomposable directly.  For $n \geq 2$,
let $G_1 = G \setminus z_{n,1}$, $G_2 = G_1 \setminus z_{n,2}$ and $G_3 = G_2 \setminus z_{n,3}$.
Furthermore, let $N_1 = G \setminus N[z_{n,1}]$, $N_2 = G_1 \setminus N[z_{n,2}]$,
and $N_3 = G_2 \setminus N[z_{n,3}]$.

First show that all of the graphs $G_1,G_2,G_3,N_1,N_2$ and $N_3$ 
are well-covered.
We note that $N_1, N_2$, and $ N_3$ are isomorphic because
of the symmetry of the graph, and each graph 
consist of $n$ connected components, where
$(n-1)$ of these components are five cycles, and the last is the path of four vertices.
All of these components are vertex decomposable, thus so is $N_i$.  The graph $G_3$
consists of two components, $L_{n-1}$ and a five cycle.  By induction, these graphs
are pure vertex decomposable.  Using these facts, we can show that $G$ is pure vertex decomposable.

Note to show that $G$ is vertex decomposable, we show that $Z \subseteq {\rm Shed}(G)$.
The next step of the proof is to show
that $X \cap {\rm Shed}(G) = \emptyset$ and $Y \cap {\rm Shed}(G) =
\emptyset$
by showing that if we remove any vertex $v \in X \cup Y$, then $G \setminus v$ is not
well-covered.  This shows that ${\rm Shed}(G)$ is not a dominating set since the vertices
of 
$X$ are only adjacent to vertices in $Y$, but no vertex of $Y$ belongs to ${\rm Shed}(G)$.
\end{proof}


\section{Graph expansions}\label{extendgraph}

In this section we briefly describe a way to extend any pure vertex decomposable
graph whose shedding set is not a dominating set, to build a larger graph with 
the same property by adding one vertex at a time. The technique involves \emph{duplicating}
a vertex in the shedding set.

\begin{theorem}\label{extend}
Suppose $G$ is a pure vertex decomposable graph and ${\rm Shed}(G)$ 
is not a dominating set. For
any $x\in {\rm Shed}(G)$, let $H$ be the graph with $V(H)=V(G)\cup \{x'\}$
and $E(H)=E(G)\cup \{ \{x',y\} ~|~ y\in N[x] \}$. Then $H$ is pure vertex 
decomposable and ${\rm Shed}(H)$ is not a dominating set.
\end{theorem}

To prove Theorem~\ref{extend}, we use a result of \cite{MK}. First we 
define a graph expansion. Let $G$ be a graph on the vertex set $\{x_1,\ldots,x_n\}$
and let $(s_1,\ldots,s_n)$ be an  
$n$-tuple of positive integers.  The {\it graph expansion of $G$},
denoted $G^{(s_1,\ldots,s_n)}$, is the graph on the vertex set
$$\{x_{1,1},\ldots,x_{1,s_1}\}\cup \{x_{2,1},\ldots,
x_{2,s_2}\}\cup \ldots \cup\{x_{n,1},\ldots,x_{n,s_n}\} $$
with edge set 
$\{\{x_{i,j},x_{k,l}\} ~|~ \{x_i,x_k\} \in E(G) ~~\mbox{or}
~~i = k\}.$ Moradi and Khosh-Ahang \cite[Theorem~2.7]{MK} showed that 
vertex decomposability is invariant under graph expansion, that is, 
$G$ is vertex decomposable if and only if $G^{(s_1,\ldots,s_n)}$ is vertex 
decomposable.   Note that a similar construction of ``twinning'' can be
found in \cite{W}.   Moradi and Khosh-Ahang's construction
can be viewed as repeated twinning.

\begin{proof}(of Theorem \ref{extend})
Suppose $G$ is a pure vertex decomposable graph with $V = \{x_1,\ldots,x_n\}$ 
and ${\rm Shed}(G)$ is not a dominating set of $G$. 
Suppose $x \in {\rm Shed}(G)$ and $H$ is a graph with $V(H)=V \cup \{x'\}$
and $E(H)=E(G)\cup \{ \{x',y\} ~|~ y\in N[x] \}$. Without loss of generality, 
assume $x=x_1$. Note that $H=G^{(2,1,\ldots,1)}$
and hence $H$ is pure vertex decomposable since vertex 
decomposability is preserved
under graph expansion, as well as the well-covered property. 

Observe that $x,x'\in {\rm Shed}(H)$, since 
$H\setminus x$ and $H\setminus x'$ are both isomorphic to
$G$ and $G$ is pure vertex decomposable.

Suppose $y\in V$ but $y\not\in {\rm Shed}(G)$.  
We claim $y\not\in {\rm Shed}(H)$. Suppose $y\in {\rm Shed}(H)$. Then $H\setminus y$ is pure vertex decomposable. Note that $H\setminus y$ is a graph expansion of $(H\setminus y)\setminus x'$ and hence $(H\setminus y)\setminus x'$ is 
pure vertex decomposable.
Now, $(H\setminus y)\setminus x'$ is isomorphic to $G\setminus y$, so $G\setminus y$ is pure vertex decomposable. But this contradicts the fact that 
$G\setminus y$ is not pure vertex decomposable if $y\not\in {\rm Shed}(G)$. 
Thus $y \not \in {\rm Shed}(H)$. 

In particular, ${\rm Shed}(H)\setminus \{x'\} \subseteq {\rm Shed}(G)$. 
It follows that ${\rm Shed}(H)$ is
not a dominating set of $H$ since a dominating set of $H$ that includes both $x$ and $x'$ would essentially be a dominating set of $G$ (since having both $x$ and $x'$ in a dominating set is redundant).
\end{proof}

It may be worth noting that it is also possible to 
construct pure vertex decomposable graphs $G$  for which ${\rm Shed}(G)$ is a dominating set 
via graph expansion. As observed in the proof above, the vertex $x$ that gets
duplicated as well as its duplicate $x'$ are both in the set of shedding vertices in the graph
expansion. It follows that if every vertex is duplicated at least once on a 
pure vertex
decomposable graph, the resulting graph will be pure vertex decomposable with every vertex 
in its shedding set.  Consequently, many graph expansions $G$ have ${\rm Shed}(G)$ as a dominating set: 

\begin{theorem}
If $G$ is any pure vertex decomposable graph and $s_i\geq 2$ for $1\leq i\leq n$, then
$G^{(s_1,s_2,\ldots,s_n)}$ is pure 
vertex decomposable and  ${\rm{Shed}}\left( G^{(s_1,s_2,\ldots,s_n)}\right)$
is a dominating set.
\end{theorem}

\section{Exploring Villarreal's conjecture}\label{insight}

This paper was partially motivated by a conjecture
of R. Villarreal \cite{V1990} about Cohen-Macaulay graphs. (Every
Cohen-Macaulay graph is well-covered.) In particular,   
Villarreal \cite{V1990} introduced the notion of an {\it edge ideal}
of $G$, that is, in the polynomial ring $R = k[x_1,\ldots,x_n]$ 
over a field $k$, let $I(G)$ denote the square-free
quadratic monomial ideal
$I(G) = \langle x_ix_j ~|~ \{x_i,x_j\} \in E \rangle.$
A graph $G$ is {\it Cohen-Macaulay} if 
the quotient ring $R/I(G)$ is a Cohen-Macaulay ring, that is,
the depth of $R/I(G)$ equals the Krull dimension of $R/I(G)$.  
The goal of \cite{V1990} was to determine
necessary and sufficient conditions for a graph to
be Cohen-Macaulay. 
Based upon computer experiments on all graphs on six or
fewer vertices, Villarreal 
proposed a two-part conjecture:

\begin{conjecture}[{\cite[Conjectures 1 and 2]{V1990}}]\label{CMconjecture}
Let $G$ be a Cohen-Macaulay graph
and let 
\[S = \{x \in V ~|~ G \setminus x \mbox{ is a Cohen-Macaulay graph}\}.\] 
Then $(i)$ $S \neq \emptyset$, and 
$(ii)$ $S$ is a dominating set of $G$.
\end{conjecture}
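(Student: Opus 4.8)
The plan is to translate the whole statement into the language of the independence complex and then attack the two parts by homological and structural means. Since $I(G)$ is exactly the Stanley--Reisner ideal of the independence complex $\Delta := \mathrm{Ind}(G)$, the graph $G$ is Cohen--Macaulay if and only if $\Delta$ is a Cohen--Macaulay complex. Deleting a vertex $x$ corresponds to the deletion $\mathrm{del}_\Delta(x) = \mathrm{Ind}(G\setminus x)$, so $x\in S$ if and only if $\mathrm{del}_\Delta(x)$ is Cohen--Macaulay; moreover $\mathrm{lk}_\Delta(x) = \mathrm{Ind}(G\setminus N[x])$, where $N[x]$ denotes the closed neighborhood. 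Thus the entire conjecture becomes a question about which vertex deletions preserve Cohen--Macaulayness of $\Delta$, and I would keep both the combinatorial picture ($\Delta$) and the algebraic picture ($R/I(G)$) available throughout.

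For part $(i)$ I would work from the short exact sequence
\[
0 \longrightarrow R/(I(G):x) \stackrel{\cdot x}{\longrightarrow} R/I(G) \longrightarrow R/(I(G)+(x)) \longrightarrow 0,
\]
in which $R/(I(G)+(x))$ is the coordinate ring of $G\setminus x$, while $R/(I(G):x)$ encodes $\mathrm{lk}_\Delta(x)=\mathrm{Ind}(G\setminus N[x])$ after killing the neighbor variables $N(x)$. Because $G$ is Cohen--Macaulay it is unmixed, so $\dim R/I(G)=\alpha(G)$ and every link in $\Delta$ is again Cohen--Macaulay. The idea is to locate a vertex $x$ for which $\mathrm{del}_\Delta(x)$ remains pure of dimension $\dim\Delta$ and then feed the Depth Lemma into the sequence: with $R/I(G)$ of full depth and the colon term controlled by the Cohen--Macaulay link, one would force $R/(I(G)+(x))$ to attain full depth as well, i.e. $x\in S$. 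Existence of at least one such $x$ should follow from the fact that a Cohen--Macaulay complex is pure and connected in codimension one, which (once $\Delta$ has more than one facet, i.e. $G$ has an edge) supplies a vertex not contained in every facet whose removal can be arranged to preserve purity.

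For part $(ii)$ I would argue locally. Fix any vertex $v$ and suppose $v\notin S$, so $\mathrm{del}_\Delta(v)$ fails to be Cohen--Macaulay. Via the Mayer--Vietoris long exact sequence attached to the decomposition $\Delta=\mathrm{del}_\Delta(v)\cup\mathrm{star}_\Delta(v)$ with intersection $\mathrm{lk}_\Delta(v)$, the obstruction to Cohen--Macaulayness of $\mathrm{del}_\Delta(v)$ is pinned to the reduced homology of $\mathrm{lk}_\Delta(v)=\mathrm{Ind}(G\setminus N[v])$. The goal is to exploit this localized obstruction to produce a neighbor $w\in N(v)$ whose deletion simultaneously repairs purity and kills the offending homology class, thereby placing $w\in S$ and dominating $v$.

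The main obstacle is part $(ii)$. Producing the domination requires converting the global Cohen--Macaulay hypothesis into a statement about one \emph{specific} neighbor of an \emph{arbitrary} vertex, and there is no evident mechanism that selects the right neighbor; the field-dependence of Cohen--Macaulayness and the intricate combinatorics of the links $\mathrm{Ind}(G\setminus N[v])$ make this selection delicate. This is precisely the phenomenon that the remainder of the paper probes: for the closely related class of vertex decomposable graphs, the analogous domination property for $\mathrm{Shed}(G)$ can fail once the graphs are large enough. I would therefore expect part $(ii)$ to need additional hypotheses, and I would test it computationally on small graphs before attempting a general argument.
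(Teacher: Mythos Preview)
The statement you are trying to prove is a \emph{conjecture}, and the paper does not prove it --- quite the opposite, the paragraphs immediately following Conjecture~\ref{CMconjecture} record that it is \emph{false}. Part $(i)$ already fails: Terai's example and the circulant graph on 16 vertices of \cite{EVMVT} are Cohen--Macaulay graphs with $S=\emptyset$, so no vertex deletion stays Cohen--Macaulay. Hence any attempted proof of $(i)$ must contain a genuine error, and your correct intuition that $(ii)$ ``might need additional hypotheses'' applies to $(i)$ as well.

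The specific gap in your argument for $(i)$ is the Depth Lemma step. In the exact sequence
\[
0 \longrightarrow R/(I(G):x) \longrightarrow R/I(G) \longrightarrow R/(I(G)+(x)) \longrightarrow 0,
\]
the middle term has depth $d=\alpha(G)$, and the left term (the Stanley--Reisner ring of $\mathrm{star}_\Delta(x)$, a cone over the Cohen--Macaulay link) also has depth $d$. The Depth Lemma then yields only $\operatorname{depth}\bigl(R/(I(G)+(x))\bigr)\ge \min(d-1,d)=d-1$, whereas $\dim\bigl(R/(I(G)+(x))\bigr)=d$ whenever $x$ misses some facet. So you obtain ``almost Cohen--Macaulay'' but not Cohen--Macaulay, and there is no mechanism to recover the missing unit of depth; the counterexamples show precisely that this unit can be lost for \emph{every} vertex $x$. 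Your appeal to connectedness in codimension one also does not produce a vertex whose deletion preserves purity: purity of $\mathrm{del}_\Delta(x)$ is equivalent to $G\setminus x$ being well-covered, and that is not automatic from Cohen--Macaulayness of $G$ for any particular $x$.
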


Notice that $(ii)$
will not hold if $(i)$ does not hold.
It is known that Conjecture \ref{CMconjecture} $(i)$ is false.  
One example is due to Terai
\cite[Exercise 6.2.24]{V2001}.  However
Terai's example depends upon the characteristic of the field
$k$.  Earl et al.  
\cite{EVMVT} found an
example of a circulant graph $G$ on 16 vertices with the property
that $G$ is Cohen-Macaulay in all characteristics, but
there is no vertex $x$ such that $G \setminus x$ is Cohen-Macaulay.

Although Conjecture \ref{CMconjecture} is false in general, 
Villarreal's work suggests that there may exist some 
nice subset of Cohen-Macaulay graphs for 
which the Conjecture \ref{CMconjecture}
still holds, particularly the subset of Cohen-Macaulay graphs for which
$S\neq \emptyset$. 
Since pure vertex decomposable
graphs are Cohen-Macaulay 
(pure vertex decomposable complexes 
are shellable complexes \cite[Corollary 2.9]{PB}, 
and shellable complexes are Cohen-Macaulay) and since ${\rm Shed}(G) \neq \emptyset$ for pure vertex decomposable
graphs $G$, we thought that, 
as a variation of 
Conjecture \ref{CMconjecture}, 
 it would be reasonable to question 
if ${\rm Shed}(G)$ is a dominating set of $G$ when $G$ is pure vertex decomposable. 
The number of positive answers to our question, 
as observed
in Theorem~\ref{summary}, initially 
suggested a positive answer for all pure vertex decomposable graphs.
However, our examples in Section~\ref{newgraphs} demonstrate that the answer
is not positive in general.

We conclude with some computational observations.
 We used {\it Macaulay2}
\cite{GS}  and the packages {\tt EdgeIdeals} \cite{FHVT}, 
{\tt Nauty} \cite{C2011}, and
{\tt SimplicialDecomposability} \cite{C2010} 
for our computations.
For all connected graphs $G$ on 10 or fewer vertices, we checked whether the graph
was $(a)$ well-covered, $(b)$ Cohen-Macaulay, $(c)$ pure vertex decomposable, and
$(d)$ if the graph was pure vertex decomposable, whether ${\rm Shed}(G)$ is a dominating
set.
Table \ref{table}
summarizes our findings.  The first column is the number of
vertices, while the second column is the number of connected graphs on $n$
vertices, and the third column is the number of well-covered graphs on $n$ 
vertices.  The second column is sequence
A001349 in the OEIS, and the third column is sequence A2226525 in the OEIS \cite{OEIS}.

As part of this computer experiment, we 
counted the number of Cohen-Macaulay graphs.  The fourth and fifth
columns of Table~\ref{table} count the number of Cohen-Macaulay graphs, respectively, the
number of pure vertex decomposable graphs.  Our computations imply 
the following result:

\begin{observation}\label{VDfew}
Let $G$ be a graph with $10$ or fewer  
vertices.  Then $G$ is Cohen-Macaulay if and only
if $G$ is pure vertex decomposable.
\end{observation}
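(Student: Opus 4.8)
The plan is to split the statement into one general implication, which requires no computation, and one exhaustive finite verification, which does. The forward direction is immediate: every vertex decomposable complex is shellable, and every shellable complex is Cohen--Macaulay over an arbitrary field, so if $G$ is vertex decomposable then $R/I(G)$ is Cohen--Macaulay. This holds for all graphs, independent of the number of vertices or of $\operatorname{char} k$. Hence the entire content of the observation lies in the reverse implication: among graphs on at most $10$ vertices, being Cohen--Macaulay must force vertex decomposability.

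For the converse I would first shrink the search space in two ways. Both properties are insensitive to disjoint unions -- vertex decomposability by Lemma~\ref{union}, and Cohen--Macaulayness because the independence complex of a disjoint union is the simplicial join of the factors' independence complexes, and a join is Cohen--Macaulay over $k$ exactly when each factor is -- so it suffices to check the equivalence on connected graphs. Moreover, a Cohen--Macaulay ring $R/I(G)$ is unmixed, so its independence complex is pure; equivalently, every Cohen--Macaulay graph is well-covered, and every vertex decomposable graph is well-covered by definition. Thus the two classes can only differ inside the well-covered graphs, and it suffices to verify the equivalence on the connected \emph{well-covered} graphs on $n \le 10$ vertices, whose counts (the third column of Table~\ref{table}) are far smaller than the total number of connected graphs.

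The verification itself is then a direct computer check. Using {\tt Nauty} \cite{C2011} I would generate the connected graphs on each $n \le 10$ up to isomorphism and filter for the well-covered ones. For each such $G$ I would build $I(G)$ with {\tt EdgeIdeals} \cite{FHVT} and test the Cohen--Macaulay property in \emph{Macaulay2} \cite{GS} by comparing $\operatorname{depth} R/I(G)$ with $\dim R/I(G)$, and test vertex decomposability with {\tt SimplicialDecomposability} \cite{C2010} (or directly via the recursive shedding-vertex criterion of Definition~\ref{graphversion}, streamlined by Theorem~\ref{VDLink}). The observation is precisely the assertion that, on every graph in this list, the two outputs agree.

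The main obstacle is twofold. Computationally, even after restricting to connected well-covered graphs the cases at $n=9$ and $n=10$ are numerous enough that running time and memory are the genuine constraint, which is why the computation was run on SHARCNET. Conceptually, the delicate point is the field of definition: Cohen--Macaulayness can depend on $\operatorname{char} k$ (as in Terai's example cited in the introduction), whereas vertex decomposability does not. To keep the equivalence unambiguous I would run the Cohen--Macaulay test both over $\mathbb{Q}$ and over a small prime field; obtaining identical answers from the two tests on every graph in the range, and seeing them agree with the characteristic-free vertex decomposability test, simultaneously shows that no characteristic dependence arises for $n \le 10$ and establishes the stated equivalence.
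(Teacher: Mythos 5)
Your proposal is correct and takes essentially the same route as the paper: the observation there is likewise established by the exhaustive computer verification described in Section~\ref{computer} (using \emph{Macaulay2} with {\tt Nauty}, {\tt EdgeIdeals}, and {\tt SimplicialDecomposability} on all connected graphs with at most $10$ vertices, filtered through well-coveredness as in Table~\ref{table}), combined with the standard implication vertex decomposable $\Rightarrow$ shellable $\Rightarrow$ Cohen--Macaulay. One small tightening of your characteristic discussion: a check over $\mathbb{Q}$ alone already suffices, because by Reisner's criterion and universal coefficients, Cohen--Macaulayness over any field implies it over $\mathbb{Q}$, so once the computation gives (CM over $\mathbb{Q}$) $=$ (vertex decomposable), the chain CM over $k$ $\Rightarrow$ CM over $\mathbb{Q}$ $\Rightarrow$ vertex decomposable $\Rightarrow$ CM over every $k$ rules out characteristic dependence without a second prime-field run.
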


\noindent
It is not true that all graphs that are Cohen-Macaulay are pure vertex decomposable (see, e.g.,
\cite{EVMVT} for a graph on 16 vertices that is Cohen-Macaulay, but not pure vertex decomposable).
However, we currently do not know the smallest such example.  Our computations
reveal that the minimal such example has at least 11 vertices.

The last column counts the number of pure vertex decomposable graphs $G$ for which
${\rm Shed}(G)$ is not a dominating set. 
Among the 17 graphs $G$ on 9 vertices for which 
${\rm Shed}(G)$ is not a dominating set, 
we found that the graph $F_2$ (see Figure~\ref{PD2}) 
has the least number of edges.

\vspace{.25cm}
\begin{table}[h!]
\begin{center}
\begin{tabular}{|c|c|c|c|c|c|}
\hline 
Vertices & Connected & Well- &Cohen- & Pure Vertex & ${\rm Shed}(G)$ not \\
& Graphs &Covered &Macaulay & Decomposable&  Dominating \\
\hline 
1 & 1 & 1& 1 & 1 & 0 \\ 
\hline 
2 & 1 & 1& 1 & 1 & 0 \\ 
\hline 
3 & 2 & 1& 1 & 1 & 0 \\ 
\hline 
4 & 6 & 3&2 & 2 & 0 \\ 
\hline 
5 & 21 & 6& 5 & 5 & 0 \\ 
\hline 
6 & 112 & 27&20 & 20 & 0 \\ 
\hline 
7 & 853 & 108& 82 & 82 & 0 \\ 
\hline 
8 & 11117 & 788& 565 & 565 & 0 \\ 
\hline 
9 & 261080 & 9035& 5688 & 5688 & 17 \\ 
\hline 
10 & 11716571 & 196928 &102039  & 102039 & 942\\ 
\hline 
\end{tabular} 
\caption{Number of well-covered, Cohen-Macaulay, and pure vertex decomposable graphs}\label{table}
\end{center}
\end{table}

\begin{observation}\label{obs}
Conjecture \ref{CMconjecture} is true for all Cohen-Macaulay
graphs on eight or fewer vertices.  The graph $F_2$ on
nine vertices and 13 edges is the minimal 
counterexample to
Conjecture \ref{CMconjecture}. 
\end{observation}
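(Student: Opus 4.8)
The plan is to reduce the statement about Cohen--Macaulay graphs entirely to the facts already proved for vertex decomposable graphs, using Observation~\ref{VDfew} as the bridge. First I would note that for any graph $G$ on $n \leq 9$ vertices and any $x \in V(G)$, the deletion $G \setminus x$ has at most $8$ vertices, so by Observation~\ref{VDfew} it is Cohen--Macaulay if and only if it is vertex decomposable; and if $G$ itself is Cohen--Macaulay then $G$ is vertex decomposable as well, so Theorem~\ref{VDLink} applies to $G$. Putting these together, the set $S = \{x \mid G \setminus x \text{ is Cohen--Macaulay}\}$ of Conjecture~\ref{CMconjecture} coincides with ${\rm Shed}(G) = \{x \mid G \setminus x \text{ is vertex decomposable}\}$. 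Hence, for a Cohen--Macaulay graph on at most nine vertices, part $(ii)$ of Conjecture~\ref{CMconjecture} is literally equivalent to the conclusion of Question~\ref{VDconjecture}, while part $(i)$ reduces to ${\rm Shed}(G) \neq \emptyset$, which holds for every nonempty vertex decomposable graph by Definition~\ref{graphversion}.

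Next I would dispatch the first half of the statement. The computer search summarized in the table checks Question~\ref{VDconjecture} for every vertex decomposable graph on eight or fewer vertices and records no failures; combined with the reduction above, this yields that both parts of Conjecture~\ref{CMconjecture} hold for every Cohen--Macaulay graph on at most eight vertices. For the counterexample, I would invoke Theorem~\ref{pmfamily} and Corollary~\ref{corP}: the graph $P_2$ has nine vertices, is vertex decomposable and hence Cohen--Macaulay, has ${\rm Shed}(P_2) = \{z_1,z_2\}$, and this set is not a dominating set. By the reduction $S = {\rm Shed}(P_2)$ therefore fails to dominate, so Conjecture~\ref{CMconjecture}$(ii)$ fails for $P_2$ (while $(i)$ still holds, since $S \neq \emptyset$).

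Finally, for minimality I would combine these with the edge count from the search. Since no Cohen--Macaulay graph on at most eight vertices is a counterexample, any counterexample has at least nine vertices; the search reports exactly $17$ vertex decomposable graphs on nine vertices failing Question~\ref{VDconjecture}, and among these $P_2$ attains the minimum number of edges, namely $13$, as one checks directly from the definition of $P_2$ (four edges in $G[X]$, three from each of $y_1,y_2$, and three in the triangle $G[Z]$). Ordering counterexamples first by number of vertices and then by number of edges, $P_2$ is therefore the minimal one.

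The main obstacle, and really the only nontrivial input, is the equivalence $S = {\rm Shed}(G)$, which rests entirely on Observation~\ref{VDfew}; everything else is bookkeeping together with the tabulated search data. The point requiring care is to apply that equivalence at the correct vertex counts, since ``Cohen--Macaulay $=$ vertex decomposable'' is needed both for $G$ on nine vertices and for each deletion $G \setminus x$ on eight vertices. This equivalence is the hinge that transports the purely algebraic Conjecture~\ref{CMconjecture} onto the combinatorial Question~\ref{VDconjecture} that the rest of the paper already controls.
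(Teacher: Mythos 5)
Your proposal is correct, and its overall skeleton matches the paper's: use Observation~\ref{VDfew} as the bridge between Cohen--Macaulayness and vertex decomposability, invoke the computational data for graphs on at most eight vertices, and invoke Theorem~\ref{pmfamily} together with Corollary~\ref{corP} for $P_2$. The one genuine difference is how the inclusion $S \subseteq {\rm Shed}(P_2)$ is established. The paper only proves the easy containment ${\rm Shed}(G) \subseteq S$ in general (enough for the eight-vertex half, since a superset of a dominating set dominates), and then for $P_2$ verifies the reverse containment \emph{directly}, by observing that the proof of Theorem~\ref{pmfamily} exhibits explicit Cohen--Macaulay obstructions for every non-shedding deletion: $P_2 \setminus v$ is not well-covered for $v \in X \cup \{z_3\}$ (and Cohen--Macaulay graphs are well-covered), while for $v \in Y$ the $h$-vector entry $h_3 = 1-m$ is negative, which rules out Cohen--Macaulayness by Villarreal's theorem underlying Lemma~\ref{hvec}. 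You instead deduce the full equality $S = {\rm Shed}(G)$ for \emph{every} Cohen--Macaulay graph on at most nine vertices by applying Observation~\ref{VDfew} to each eight-vertex deletion $G \setminus x$. Your route is more uniform and proves a slightly stronger statement, but it leans entirely on the computational observation, including for deletions $G \setminus x$ that may be disconnected; since the search was run over connected graphs, one should remark that the equivalence extends to disjoint unions via Lemma~\ref{union} and the analogous componentwise behaviour of Cohen--Macaulayness. The paper's route buys independence from that extension: the counterexample's status is certified by hand-checkable witnesses (unbalanced maximal independent sets and a negative $h$-vector) rather than by the table. Your edge count of $13$ for $P_2$ and the ordering argument for minimality are correct and fill in detail the paper leaves to its computations.
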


\noindent 
{\it Rationale for Observation~\ref{obs}:}
Let $G$ be any Cohen-Macaulay graph
and let 
\[S = \{x \in V~|~ G \setminus x ~~\mbox{is a Cohen-Macaulay graph}\}.\]
If $G$ is also pure vertex decomposable and if $x \in {\rm Shed}(G)$, then
$G \setminus x$ is pure vertex decomposable, so $G \setminus x$ 
is Cohen-Macaulay.  So, we always have ${\rm Shed}(G) \subseteq S$.

If $G$ is a Cohen-Macaulay graph on eight or fewer vertices,
it is also pure vertex decomposable by Remark~\ref{VDfew}.  Also,
our computations imply that ${\rm Shed}(G)$ is a dominating set for all
such graphs and hence $S$ is also a dominating set. 

In our proof Theorem~\ref{pmfamily}, we showed that $F_2$ is a pure vertex
decomposable graph.  Furthermore, for
every vertex $x \in V(F_2) \setminus {\rm Shed}(F_2)$, the graph
$F_2 \setminus x$ is either not well-covered (and thus not
Cohen-Macaulay) or not Cohen-Macaulay.  
So, ${\rm Shed}(F_2) = S$, and thus $F_2$ is a counterexample to Conjecture~\ref{CMconjecture} 
by Corollary~\ref{corP} since the shedding set is never
dominating.  The minimality in our statement follows
via our computations: of the counterexamples on nine vertices, $F_2$ has the least number of edges.
\qed 

\noindent
\textbf{Acknowledgements.} We would like to thank 
Nuno Alves for pointing out a gap in our original proof of Lemma 3.5, and
the referees for helpful suggestions and improvements.
Research supported in part by NSERC. This work was made possible by the facilities of the Shared Hierarchical 
Academic Research Computing Network (SHARCNET: {\tt www.sharcnet.ca}) and Compute/Calcul Canada. The third author
wrote part of this paper at the Vietnam Institute for Advanced Study 
in Mathematics (VIASM), and would like to thank the institute for their
hospitality and support during his stay.


\end{document}